\numberwithin{equation}{section}
\theoremstyle{plain}
\newtheorem{thm}{Theorem}
\newtheorem{lem}{Lemma}
\theoremstyle{definition}
\newtheorem{expl}{Example}
\theoremstyle{remark}
\newtheorem{remk}{Remark}
\def\moverlay{\mathpalette\mov@rlay}
\def\mov@rlay#1#2{\leavevmode\vtop{%
   \baselineskip\z@skip \lineskiplimit-\maxdimen
   \ialign{\hfil$\m@th#1##$\hfil\cr#2\crcr}}}
\newcommand{\charfusion}[3][\mathord]{
    #1{\ifx#1\mathop\vphantom{#2}\fi
        \mathpalette\mov@rlay{#2\cr#3}
      }
    \ifx#1\mathop\expandafter\displaylimits\fi}
\newcommand*{\rom}[1]{\expandafter\@slowromancap\romannumeral #1@} 
\renewcommand{\phi}{\varphi} 
\newcommand{\nooutput}[1]{}
\newcommand{\1}{1\!\!\!\!\!\;{\rm I}}
\newcommand{\mbR}{{\mathbb R}}
\newcommand{\cF}{{\mathcal F}}
\newcommand{\sign}{\mathop{\rm sgn}}
\newcommand{\ve}{\varepsilon}
\newcommand{\const}{\mathop{\rm const}}
\newcommand{\be}{\begin{equation}}
\newcommand{\ee}{\end{equation}}
\begin{document}

\title[On a Selection Problem for Small Noise Perturbation in
Multidimensional Case]{On a Selection Problem for Small Noise Perturbation in
Multidimensional Case}

\date{\today}

\author{Andrey Pilipenko}
\address{Institute of Mathematics,  National Academy of Sciences of
Ukraine, Tereshchenkivska str. 3, 01601, Kiev, Ukraine}
\thanks{Research is partially supported by FP7-People-2011-IRSES Project number 295164}

\author{Frank Norbert Proske}
\address{Department of Mathematics, University of Oslo, PO Box 1053 Blindern, N-316 Oslo, Norway}

\keywords{Keywords}
\subjclass[2010]{60H10, 49N60}

\begin{abstract}
The problem on identification of a limit of an ordinary differential equation with discontinuous drift that perturbed by a zero-noise is considered in multidimensional case. This  problem is a classical subject of stochastic analysis, see, for example, \cite{BaficaBalsi, Trevisan, Flandoli, KrykunMakhno}. However the multidimensional case was poorly investigated.  We assume that the drift coefficient has a jump discontinuity along a hyperplane and is Lipschitz  continuous in the upper and lower half-spaces. It appears that the behavior of the limit process depends on signs of the normal component of the drift at the upper and lower half-spaces in a neighborhood of the hyperplane, all cases are considered.
\end{abstract}

\maketitle

\section{Introduction}

Consider the Cauchy problem 
\begin{equation}
X_{t}=x+\int_{0}^{t}b(X_{s})ds,t\geq 0  \label{CP}
\end{equation}%
for $x\in \mathbb{R}^{d}$, where $b:\mathbb{R}^{d}\longrightarrow \mathbb{R}%
^{d}$ is a Borel measurable vector field.

If $b$ satisfies a Lipschitz and linear growth condition, it is well-known
that there exists a global unique solution $X_{\cdot }\in C([0,\infty );%
\mathbb{R}^{d})$ to (\ref{CP}).

However, if $b$ is not Lipschitzian, the situation may change dramatically
and well-posedness of (\ref{CP}) in the sense of uniqueness or even
existence of solutions may fail.

An example of such a function is given by%
\begin{equation}
b(x)=2sgn(x)\sqrt{\left\vert x\right\vert }  \label{sgn1}
\end{equation}%
for $X_{0}=0$ and dimension $d=1,$ where the extremal trajectories $%
X_{t}=+t^{2},-t^{2}$ and the zero curve $X_{t}=0,t\geq 0$ are solutions to (%
\ref{CP}) among infinitely other ones.

Another example in the case of a discontinuous vector field is%
\begin{equation}
b(x)=sgn(x)  \label{sgn2}
\end{equation}%
for $X_{0}=0$ with infinitely many solutions, where $X_{t}=+t,-t,t%
\geq 0$ are extremal solutions.

If we merely require that the vector field $b$ is continuous, satisfying a
growth condition of the form $\left\langle b(x),x\right\rangle \leq
K(\left\vert x\right\vert ^{2}+1)$ then it follows from Peano's theorem and
the theorem of Arzel\`{a}-Ascoli that the set $C(x)$ of solutions $X_{\cdot
}\in C([0,\infty );\mathbb{R}^{d})$ of (\ref{CP}) is non-empty and compact
in $C([0,\infty );\mathbb{R}^{d})$. Moreover, $C(x)$ is connected. See \cite%
{St}.

Here the problem of uniqueness of solutions of (\ref{CP}) for an initial
distribution $\mu _{0}$ on $\mathbb{R}^{d}$ , which corresponds to the case,
when $C(x)$ is a singleton for $x$ $\mu _{0}-$a.e., can be characterized by
unique solutions of narrowly mesurable families of probability measures $%
(\mu _{t})_{t\geq 0}$ on $C([0,\infty );\mathbb{R}^{d})$ satisfying the
continuity equation%
\begin{equation}
\mu _{t}(f)=\mu _{0}(f)+\int_{0}^{t}\mu _{s}(b\cdot \triangledown f)ds,t\geq
0  \label{ContEq}
\end{equation}%
for all $f\in C_{c}^{\infty }(\mathbb{R}^{d})$ (space of smooth functions
with compact support). It turns out that such solutions have the
representation $\mu _{t}=\pi _{t}\mu ,t\geq 0$ for projections $\pi _{t}$
and a probability measure $\mu $ on $C([0,\infty );\mathbb{R}^{d})$ called a
superposition of solutions of the ODE (\ref{CP}). See \cite{DL}, \cite{A}
and \cite{F} for more information on the concept of superposition of
solutions.

Using the concept of renormalized solutions, we mention that for $b\in
W^{1,1}(\mathbb{R}^{d})$ with $div(b)=0$ and initial distributions $\mu _{0}$
with $\frac{\partial \mu _{0}}{\partial x}\in L^{\infty }(\mathbb{R}^{d})$
it can be shown that the continuity equation has a unique solution $(\mu
_{t})_{t\geq 0}$ in the subclass of solutions for which $\frac{\partial \mu
_{t}}{\partial x}\in L^{\infty }(\mathbb{R}^{d})$ for all $t\geq 0.$ See 
\cite{DL}, \cite{A} and \cite{AG}.

On the other hand, the case, when $C(x)$ is not a singleton, gives rise to
the natural question of how an "appropriate" or "meaningful" solution to (%
\ref{CP}) can be selected.

One important method in connection with this selection problem is due to
Krylov \cite{Kr}, who constructed Markov selections, that is families of
superposition solutions $(\mu ^{x})_{x\in \mathbb{R}^{d}}$ such that%
\begin{equation*}
\mu _{t+s}^{x}=\int_{\mathbb{R}^{d}}\mu _{s}^{x}\mu _{t}^{x}(dy),t,s\geq
0,x\in \mathbb{R}^{d}
\end{equation*}%
holds for $\mu _{t}^{x}=\pi _{t}\mu ^{x}$.

Another crucial approach which we want to employ in this paper is that of
zero-noise selection, that is the selection of a solution $X_{\cdot }$ to (%
\ref{CP}) as a limiting value of solutions $X_{\cdot }^{\varepsilon }$ of
the ODE (\ref{CP}) perturbed by a small noise $\varepsilon w(\cdot )$ given
by the stochastic differential equation (SDE)%
\begin{equation}
X_{t}^{\varepsilon }=x+\int_{0}^{t}b(X_{s})ds+\varepsilon w(t)
\label{SmallNoise}
\end{equation}%
for $\varepsilon \searrow 0$ in the sense of convergence in law, where $%
w(\cdot )$ is a $d$-dimensional Wiener process. The motivation for this
selection principle comes from the desire to construct solutions to (\ref{CP}%
), which are stable under random perturbations.

The first results in this direction, that is when $C(x)$ is not a singleton
("Peano phenomenon"), was obtained in the foundational papers of Bafico \cite%
{Ba} and Bafico, Baldi \cite{BaficaBalsi} in the case of one-dimensional
time-homogeneous vector fields $b$, where the authors prove under certain
conditions the existence of a unique limiting law on a small time interval
which is concentrated on at most two trajectories. The proof of the latter
results relies on estimates of mean exit times of $X_{\cdot }^{\varepsilon }$
with respect to (small) neighbourhoods of isolated singular points of $b$ by
means of solutions of an associated boundary value problem. We also mention
the papers \cite{GHR}, \cite{H}, where the authors use large deviation
techniques to study the convergence rate of the laws of $X_{\cdot
}^{\varepsilon }$ for a concrete class of one-dimensional time-homogeneous
functions $b$ related to (\ref{sgn1}). In this context it is also worth
mentioning the work of \cite{BS}, which among other things deals with the
study of the small noise problem (\ref{SmallNoise}) based on viscosity
solutions of (perturbed) backward Kolmogorov equations in the scalar case.
See also the Malliavin calculus approach in \cite{MMP} and the article \cite%
{Trevisan} based on local time techniques.

We also remark that extensions of the paper \cite{BaficaBalsi} to the case of
zero-noise limits of linear transport equations associated with the
one-dimensional ODE%
\begin{equation*}
dX_{t}=2sgn(X_{t})\left\vert X_{t}\right\vert ^{\gamma },\gamma \in (0,1)
\end{equation*}%
were analyzed in \cite{AF}, \cite{Att}. See also \cite{DLN}, \cite{Ma} in
the case of zero-noise limits of non-linear PDE's.

\bigskip

Let us now have a look at the small noise problem (\ref{SmallNoise}) in the
multidimensional case. In fact the muldimensional problem is scarcely
treated in the literature. Here we shall distinguish between the continuous
and discontinuous vector fields:

In the case of bounded and continuous functions $b$ Zhang \cite{Zh} gives a
characterization of the limiting values $X_{\cdot }^{\varepsilon }$ of (\ref%
{SmallNoise}) by using viscosity solutions of Hamilton-Jacobi-Bellman
equations in connection with so-called exit time functions, which partially
extends results in \cite{BaficaBalsi} to the multidimensional setting.

To the best of our knowledge, the case of discontinuous multidimensional
vector fields $b$ has been only examined in the papers of Delarue, Flandoli,
Vincenzi \cite{DFV} and \cite{BOQ}. In the remarkable work \cite{DFV} the
authors study small noise perturbations of the Vlasov-Poisson equation by
means of estimates of probabilities for exit times in connection with a
zero-noise limit for ODE's in four dimensions. The paper \cite{BOQ} deals
with ODE's (\ref{CP}) for merely measurable $b$. However, the concept of
solutions to (\ref{CP}) in the latter work is in the sense of Filippov,
which we don't want to consider in this article.

The objective of our paper is the analysis of zero-noise limits in the case
of discontinuous time-inhomogeneous vector fields $b$ in $\mathbb{R}^{d}$.
More precisely, we aim at considering vector fields $b$, whose discontinuity
points are located in a hyperplane. Our method for the construction of
zero-noise limits, which is different from the techniques of the above
mentioned authors, is based on estimates of probabilities for exit times of $%
X_{\cdot }^{\varepsilon }$ at discontinuity points in a hyperplane. We
comment that our approach extends the one in \cite{BaficaBalsi} to the
multidimensional case. In contrast to \cite{BaficaBalsi} our technique does not
require knowledge of the explicit distribution of $X_{\cdot }^{\varepsilon }$%
. We in fact show that the behavior of the limiting process depends on the
normal component of the drift at the upper and lower half-spaces in a
neighbourhood of the hyperplane.

\section{Main results}

Consider an SDE in $\mbR^d$ with a small noise parameter
\be\label{eq1.1}
X^\ve(t)=x^0+\int_0^tb(s,X^\ve(s))ds+\ve w(t),
\ee
where $w(t), t\geq 0,$ is a Wiener process.

Assume that
$$
b(t,x)=\begin{cases}
b^+(t,x), \ x_d\geq 0\\
b^-(t,x), \ x_d< 0
\end{cases}=b^+(t,x)\1_{x_d\geq 0}+b^-(t,x)\1_{x_d< 0},
$$
where $x=(x_1,\dots,x_d)=(\bar x,x_d),$ $b^\pm$ are measurable locally bounded functions that satisfy a Lipschitz condition in $x$ on
$\mbR^d$.

There exists a unique strong solution to SDE \eqref{eq1.1} by Veretennikov's theorem \cite{Veretennikov}.

Note that $X^\ve$ spends zero time at the hyper-plane $H:=\{x\in\mbR^d\  : \ x_d=0\},$ so it does not
matter how to define the drift coefficient if $x_d=0.$

Consider a formal limit equation for \eqref{eq1.1}
\be\label{eq2.1}
X(t)=x^0+\int_0^tb(s,X(s))ds.
\ee
Since $b$ is Lipschitz continuous in $x$ outside of $H,$   equation \eqref{eq1.1} has a unique solution up to the moment $\tau$ of hitting $H$,
$$
\tau_H:=\inf\{t\geq 0: \ X(t)\in H\}.
$$
Set
$$
\tau^{(\ve)}_H:=\inf\{t\geq 0: \ X^\ve(t)\in H\}.
$$
\begin{thm}\label{thm1}
Let $x^0\notin H.$ Then
we have   the following convergence   with probability 1
$$
X^\ve (\cdot\wedge \tau_H)\to X (\cdot\wedge \tau_H), \ \ve\to 0,
$$
where $X^\ve, X$ are considered as random elements with values in  the space of continuous functions $C([0,\infty), \mbR^d)$
with the topology of uniform convergence on compact sets.

Moreover, if $\mp b^\pm(X(\tau_H))>0,$ then
$
 \tau^{(\ve)}_H \to \tau_H, \ \ve\to 0 \ \ \ \mbox{a.s.}
$
\end{thm}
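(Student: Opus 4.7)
The plan is to reduce the problem to a globally Lipschitz auxiliary SDE for which continuous dependence on parameters is standard. Assume without loss of generality $x_d^0>0$ (the case $x_d^0<0$ is identical after swapping $b^+$ and $b^-$). Since $b^+$ is globally Lipschitz in $x$, let $\tilde X^\ve$ denote the unique strong solution of
\begin{equation*}
\tilde X^\ve(t)=x^0+\int_0^t b^+(s,\tilde X^\ve(s))\,ds+\ve w(t),
\end{equation*}
and let $X^+$ be the deterministic flow of $b^+$ starting from $x^0$, so that $X(s)=X^+(s)$ on $[0,\tau_H]$. A standard Gronwall estimate yields $\sup_{s\in[0,T]}|\tilde X^\ve(s)-X^+(s)|\to 0$ a.s.\ for every $T>0$. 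Setting $\tilde\tau^\ve:=\inf\{t\ge 0:\tilde X^\ve_d(t)=0\}$, one has $\tilde X^\ve_d>0$ on $[0,\tilde\tau^\ve)$, so that $b^+(\cdot,\tilde X^\ve)=b(\cdot,\tilde X^\ve)$ there; strong uniqueness (Veretennikov) then forces $X^\ve=\tilde X^\ve$ on $[0,\tilde\tau^\ve]$, and in particular $\tau^{(\ve)}_H=\tilde\tau^\ve$.

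For the first conclusion I first prove $\liminf_{\ve\to 0}\tau^{(\ve)}_H\ge\tau_H$ a.s. Fix $\eta>0$; on $[0,\tau_H-\eta]$ the deterministic curve $X^+_d$ is bounded below by some $\delta_\eta>0$, and uniform convergence gives $\tilde X^\ve_d(s)>\delta_\eta/2>0$ on this interval for $\ve$ small, whence $\tilde\tau^\ve\ge\tau_H-\eta$; letting $\eta\to 0$ yields the claim. On $[0,\tau_H\wedge\tau^{(\ve)}_H]$ one has $X^\ve=\tilde X^\ve$, which converges uniformly to $X^+=X$. It remains to control $X^\ve$ on the (possibly empty) random interval $[\tau^{(\ve)}_H,\tau_H]$, whose length tends to zero a.s. by the $\liminf$ bound. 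On this interval $X^\ve(\tau^{(\ve)}_H)=\tilde X^\ve(\tilde\tau^\ve)\to X(\tau_H)$ (by uniform convergence together with continuity of $X^+$), and a short bootstrap using local boundedness of $b$ on a fixed ball around $X(\tau_H)$ shows
\begin{equation*}
|X^\ve(s)-X^\ve(\tau^{(\ve)}_H)|\le M(\tau_H-\tau^{(\ve)}_H)+\ve\sup_{u\le T}|w(u)-w(\tau^{(\ve)}_H)|\to 0,
\end{equation*}
so that $\sup_{s\in[0,\tau_H]}|X^\ve(s)-X(s)|\to 0$ a.s. This gives the desired convergence in $C([0,\infty),\mbR^d)$, since both stopped paths are constant after $\tau_H$.

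For the moreover part, the hypothesis $\mp b^\pm(X(\tau_H))>0$ yields in the present case ($x_d^0>0$) the transversality condition $b^+_d(X(\tau_H))<0$. Extending $X^+$ past $\tau_H$ via the global flow of $b^+$, one has $\dot X^+_d(\tau_H)<0$, hence $X^+_d(\tau_H+\eta)<0$ for some small $\eta>0$. Uniform convergence of $\tilde X^\ve$ to $X^+$ on $[0,\tau_H+\eta]$ forces $\tilde X^\ve_d(\tau_H+\eta)<0$ for small $\ve$, so $\tau^{(\ve)}_H=\tilde\tau^\ve\le\tau_H+\eta$. Sending $\eta\to 0$ gives $\limsup\tau^{(\ve)}_H\le\tau_H$, which combined with the $\liminf$ bound yields $\tau^{(\ve)}_H\to\tau_H$ a.s. The main obstacle is precisely the bootstrap step on $[\tau^{(\ve)}_H,\tau_H]$ in the first part: because $b$ is only locally bounded (and not Lipschitz) across $H$, one cannot directly compare $X^\ve$ to $\tilde X^\ve$ there and must instead exploit the vanishing length of the interval together with self-consistent confinement of $X^\ve$ in a neighbourhood of $X(\tau_H)$.
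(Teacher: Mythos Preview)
The paper gives no proof of this theorem; it simply writes ``The proof is standard.'' Your argument is a correct realisation of that standard proof: reduce to the globally Lipschitz auxiliary SDE driven by $b^+$, identify $X^\ve$ with $\tilde X^\ve$ up to the first hitting of $H$ via strong uniqueness, obtain $\liminf_{\ve\to0}\tau^{(\ve)}_H\ge\tau_H$ from uniform convergence, and finish with a confinement argument on the vanishing residual interval; the ``moreover'' part follows from transversality exactly as you describe.

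Two small points that would tighten the write-up. First, separate out the case $\tau_H=+\infty$: your phrase ``on $[0,\tau_H-\eta]$'' does not literally apply there, but the same positivity argument on each compact $[0,T]$ yields $\tau^{(\ve)}_H\to\infty$, so $X^\ve=\tilde X^\ve$ on $[0,T]$ eventually and the bootstrap is not needed. Second, the bootstrap on $[\tau^{(\ve)}_H,\tau_H]$ that you flag as the main obstacle is correct in spirit but should be stated with the auxiliary exit time made explicit: fix a ball $B$ about $X(\tau_H)$ with $|b|\le M$ on $[0,\tau_H+1]\times B$, let $\theta^\ve=\inf\{t\ge\tau^{(\ve)}_H: X^\ve(t)\notin B\}$, and note that on $[\tau^{(\ve)}_H,\theta^\ve\wedge\tau_H]$ your displayed estimate holds; since its right-hand side tends to $0$ while $|X^\ve(\tau^{(\ve)}_H)-X(\tau_H)|\to0$, one gets $\theta^\ve>\tau_H$ for small $\ve$ and the loop closes. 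With these clarifications the proof is complete.
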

The proof is standard.
\begin{remk}
If $x^0\notin H$ and
$$
\exists  c>0\ \forall t\geq 0\ \forall x\in\mbR^d_+:\ \sign(x_d)b_d(t,x)\geq -c x_d,
$$
 then $\tau=+\infty $ and  therefore $X^\ve \to X, \ve\to 0 $ a.s. in $C([0,\infty), \mbR^d)$.
\end{remk}

For any initial condition $x_0$ the following result is fulfilled.
\begin{lem}\label{lem_rel_comp}
The sequence of distributions of $\{X^\ve, \ve\in (0,1)\}$ is weakly relatively compact in $C([0,\infty); \mbR^d).$

For any limit point $X$   of $\{X^\ve \}$  as $\ve\to 0+$ and for any $t_0\geq 0$ the following equality holds a.s.
$$
X(t)=X(t_0)+\int_{t_0}^t
b(s, X(s))ds, t\in [t_0, \tau_{t_0,H}],
$$
where $\tau_{t_0,H}=\inf\{t\geq t_0: X(t)\in H\}.$

\end{lem}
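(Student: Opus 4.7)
\medskip
\noindent\textbf{Plan.} The proof separates into proving tightness of $\{X^\ve\}_{\ve\in(0,1)}$ and then identifying any weak subsequential limit $X$ on $[t_0,\tau_{t_0,H}]$.

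\medskip
\noindent\textbf{Tightness.} Since each $b^\pm(t,\cdot)$ is globally Lipschitz on $\mbR^d$ and $b^\pm$ is locally bounded, one obtains a linear growth bound $|b(t,x)|\le K_T(1+|x|)$ on $[0,T]\times\mbR^d$ for every $T>0$. A standard Gronwall argument applied to \eqref{eq1.1} gives $\sup_{\ve\in(0,1)}E\sup_{t\le T}|X^\ve(t)|^p<\infty$ for every $p\ge 2$, and the increment estimate
$$
E|X^\ve(t)-X^\ve(s)|^4\le C_T\bigl(|t-s|^4+\ve^4|t-s|^2\bigr),\qquad 0\le s\le t\le T,
$$
together with Kolmogorov's tightness criterion yields weak relative compactness of $\{X^\ve\}_{\ve\in(0,1)}$ in $C([0,\infty);\mbR^d)$.

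\medskip
\noindent\textbf{Identification.} Let $\ve_n\downarrow 0$ be a subsequence along which $X^{\ve_n}\Rightarrow X$. The joint family $\{(X^{\ve_n},w)\}_n$ is tight in $C([0,\infty);\mbR^d)\times C([0,\infty);\mbR^d)$, so passing to a further subsequence and invoking Skorokhod's representation theorem produces, on a new probability space, processes $(\tilde X^{\ve_n},\tilde w_n)\to(\tilde X,\tilde w)$ uniformly on compacts almost surely, with $\tilde X\stackrel{d}{=}X$ and the pathwise identity
$$
\tilde X^{\ve_n}(t)=x^0+\int_0^t b(s,\tilde X^{\ve_n}(s))\,ds+\ve_n\tilde w_n(t)
$$
preserved by the coupling. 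Fix $t_0\ge 0$. If $\tilde X(t_0)\in H$ then $\tau_{t_0,H}=t_0$ and the claim is vacuous; otherwise assume without loss of generality $\tilde X_d(t_0)>0$, whence $\tilde X_d(s)>0$ on $[t_0,\tau_{t_0,H})$. For any $t^*<\tau_{t_0,H}$, continuity and compactness give $\delta:=\inf_{s\in[t_0,t^*]}\tilde X_d(s)>0$, and uniform convergence yields $\tilde X^{\ve_n}_d(s)\ge\delta/2$ on $[t_0,t^*]$ for all sufficiently large $n$. On this set $b(s,\tilde X^{\ve_n}(s))=b^+(s,\tilde X^{\ve_n}(s))$, and the Lipschitz continuity of $b^+$ gives uniform convergence to $b^+(s,\tilde X(s))=b(s,\tilde X(s))$. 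Dominated convergence (with dominating function $K_T(1+\sup_n\|\tilde X^{\ve_n}\|_{\infty,T})$, which is a.s.\ finite by Fatou and the moment bounds above) handles the drift integral, while $\ve_n\tilde w_n(t)\to 0$, giving
$$
\tilde X(t)=\tilde X(t_0)+\int_{t_0}^t b(s,\tilde X(s))\,ds\qquad\text{for every } t\in[t_0,t^*].
$$
Letting $t^*\uparrow\tau_{t_0,H}$ and using continuity of both sides extends the identity to $[t_0,\tau_{t_0,H}]$; since $\tilde X\stackrel{d}{=}X$ and the validity of this equation is a measurable functional of the path, the same equation holds almost surely for $X$.

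\medskip
\noindent\textbf{Main obstacle.} The essential difficulty is the discontinuity of $b$ along $H$: one cannot pass to the limit in $\int_0^t b(s,\tilde X^{\ve_n}(s))\,ds$ once the limiting trajectory has touched $H$, since then the approximating paths may sample both half-spaces and the non-continuous piece of $b$ becomes relevant. This is precisely why the lemma is formulated only up to $\tau_{t_0,H}$. The resolution is the observation that, on any closed subinterval of $[t_0,\tau_{t_0,H})$, continuity of $\tilde X$ together with uniform convergence forces the approximating paths $\tilde X^{\ve_n}$ to stay at a uniformly positive distance from $H$ for large $n$, so that effectively only the single continuous branch $b^+$ (or $b^-$) is sampled and the classical convergence arguments apply.
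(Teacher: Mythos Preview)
The paper does not actually prove this lemma; it simply declares ``The proof of the Lemma is standard.'' Your argument supplies exactly the kind of standard proof the authors have in mind: tightness via linear growth, moment bounds and Kolmogorov's criterion, followed by Skorokhod's representation and the observation that on any compact subinterval of $[t_0,\tau_{t_0,H})$ the approximating paths eventually lie strictly on one side of $H$, so only the Lipschitz branch $b^+$ (or $b^-$) is sampled and the limit passes through the drift integral. This is correct, and your ``main obstacle'' paragraph correctly isolates why the conclusion is stated only up to $\tau_{t_0,H}$.

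Two very minor remarks. First, you invoke Fatou to bound $\sup_n\|\tilde X^{\ve_n}\|_{\infty,T}$, but this is immediate from almost sure uniform convergence on $[0,T]$ (a convergent sequence is bounded); no moment argument is needed at that point. Second, the pointwise convergence $b^+(s,\tilde X^{\ve_n}(s))\to b^+(s,\tilde X(s))$ uses only the Lipschitz property in $x$, not continuity in $s$; you phrase this correctly, but it is worth noting explicitly since the paper assumes only measurability and local boundedness in $(t,x)$.
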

The proof of the Lemma is standard.
\begin{remk}\label{rem_subseq}
To prove that the sequence  $\{X^\ve, \ve\in (0,1)\}$ converges in distribution to a process $X^0$ as $\ve\to 0+$, it is sufficient to show that for any sequence $\{\ve_k\}$,
  $\lim_{k\to 0} \ve_k=0$ there exists a subsequence $\{\ve_{k_l}\}$ such that $ X^{\ve_{k_l}}\Rightarrow X^0, \ l\to\infty.$ Since the family $\{X^\ve, \ve\in (0,1)\}$ is weakly relatively compact, without loss of generality we may initially assume that $\{X^{\ve_k}\}$ is already convergent.
\end{remk}

To describe the behavior of    processes after the hitting $H,$ let us assume that the initial starting point $x_0$ belongs to $H$.

Consider the following cases.

{\bf  A1} $\exists T>0\ \exists \delta>0\ \exists c>0\ \forall t\in[0,T]\ \forall x\in\mbR^d\setminus H, |x-x^0|\leq \delta:\ \
\sign(x_d) b_d(t,x)\geq c;$


{\bf  A$2_+$} $\exists T>0\ \exists \delta>0\ \exists c>0\ \forall t\in[0,T]\ \forall x\in\mbR^d_+, |x-x^0|\leq \delta:\ \
b_d^+(t,x)\geq c$ and $\forall x\in H, |x-x^0|\leq \delta:\ \
b_d^-(t,x)\geq 0;$


{\bf  A$2_-$} $\exists T>0\ \exists \delta>0\ \exists c>0\ \forall t\in[0,T]\  \forall x\in\mbR^d_-, |x-x^0|\leq \delta:\ \
b_d^-(t,x)\leq -c$ and $\forall x\in H, |x-x^0|\leq \delta:\ \
b_d^+(t,x)\leq 0;$

{\bf  A3} $\exists T>0\ \exists \delta>0\ \exists c>0 \forall t\in[0,T]\ \forall x\in\mbR^d\setminus H, |x-x^0|\leq \delta:\ \
\sign(x_d) b_d(t,x)\leq -c;$


{\bf  A3$_+$} $\exists T>0\ \exists \delta>0\ \exists c>0 \forall t\in[0,T]\ \forall x\in\mbR^d_+, |x-x^0|\leq \delta:\ \
b_d^+(t,x)\leq -c$ and $\forall x\in H, |x-x^0|\leq \delta:\ \
  b_d^-(t,x)=0;$

{\bf  A3$_-$} $\exists T>0\ \exists \delta>0\ \exists c>0 \forall t\in[0,T]\ \forall x\in\mbR^d_+, |x-x^0|\leq \delta:\ \
b_d^-(t,x)\geq c$ and $\forall x\in H, |x-x^0|\leq \delta:\ \
  b_d^+(t,x)=0;$

{\bf A4}  $\exists T>0\ \exists \delta>0\  \forall t\in[0,T]\ \forall x\in H, |x-x^0|\leq \delta:\ \
  b_d^\pm(t,x)=0.$

Assume that A1 holds. Then there exist unique (local) solutions to \eqref{eq2.1} that
leave $H$ to the positive half-space or negative half-space, respectively. Denote them by $X^+(t)$ and $X^-(t)$,
\be\label{eq4.1}
X^\pm(t)=x^0+\int_0^tb^\pm(s, X^\pm(s))ds, \ t\in [0,\tau^\pm_H],
\ee
where
\be\label{eq4.2}
\tau^\pm_H=\inf\{t>0 \ : \ X^\pm(t)\in H\}.
\ee

\begin{thm}\label{thm3}
Assume that A1 is satisfied.
The distribution of $X^\ve(\cdot\wedge \tau^+_H\wedge\tau^-_H)$ converges weakly as $\ve\to 0$ to the  measure
$$
p_-\delta_{X^-(\cdot\wedge \tau^+_H\wedge\tau^-_H) }+p_+\delta_{X^+(\cdot\wedge \tau^+_H\wedge\tau^-_H) },
$$
where
$$
p_\pm=\frac{\pm b_d^\pm(0,x^0)}{b^+_d(0,x^0)-b^-_d(0,x^0)}.
$$
\end{thm}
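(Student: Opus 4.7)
The plan is to reduce matters, via Lemma~\ref{lem_rel_comp} and Remark~\ref{rem_subseq}, to identifying every weak limit $X^0$ of a subsequence $X^{\varepsilon_k}$ with the mixture $p_+\delta_{X^+}+p_-\delta_{X^-}$ on the random interval $[0,\tau^+_H\wedge\tau^-_H]$. The key observation is that, starting from $x^0\in H$, the normal component $Y^\varepsilon(t):=X^\varepsilon_d(t)$ behaves, before leaving a thin slab around $H$, like a one dimensional diffusion with piecewise constant drift, $dZ^\varepsilon=(\alpha\1_{Z^\varepsilon>0}+\beta\1_{Z^\varepsilon<0})\,dt+\varepsilon\,dw_d(t)$, where $\alpha:=b_d^+(0,x^0)\ge c>0$ and $\beta:=b_d^-(0,x^0)\le -c<0$ by A1. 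The scale function of this model SDE is elementary, and a short computation (letting $\varepsilon\to 0$ with $\delta>0$ fixed) gives that $Z^\varepsilon$ exits $(-\delta,\delta)$ on the right with probability converging to $\alpha/(\alpha-\beta)=p_+$ and on the left with probability converging to $-\beta/(\alpha-\beta)=p_-$.

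First I would fix $\delta>0$ small enough for A1 to apply on $\{|x-x^0|\le 2\delta\}\times[0,T]$ and introduce the exit time
\[
\sigma^\varepsilon_\delta:=\inf\bigl\{t\ge 0:|X^\varepsilon_d(t)|=\delta\ \text{or}\ |X^\varepsilon(t)-x^0|=2\delta\bigr\}.
\]
A Tanaka-type argument applied to $|Y^\varepsilon|$ gives $|Y^\varepsilon(t)|\ge ct+\varepsilon\widetilde w(t)$ for a Brownian motion $\widetilde w$ up to the lateral exit, so combined with $\sup_{s\le T}|\varepsilon w(s)|\to 0$ in probability one obtains $\sigma^\varepsilon_\delta=O(\delta)$ with probability tending to one, and the exit occurs through $\{|x_d|=\delta\}$ rather than through the lateral boundary. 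I would then compute $\PP(X^\varepsilon_d(\sigma^\varepsilon_\delta)=+\delta)$ by bracketing the genuine drift $b_d(t,x)$ inside the slab between the constants $\alpha\pm\eta$ for $x_d>0$ and $\beta\pm\eta$ for $x_d<0$ (permitted for arbitrarily small $\eta>0$ by continuity of $b_d^\pm$ at $x^0$), applying the one dimensional scale function computation to the two constant coefficient SDEs, and using the classical one dimensional stochastic comparison theorem to sandwich $Y^\varepsilon$ between them. Sending first $\varepsilon\to 0$ and then $\eta\to 0$ yields $\PP(X^\varepsilon_d(\sigma^\varepsilon_\delta)=\pm\delta)\to p_\pm$.

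Conditionally on exit at $+\delta$, the trajectory at time $\sigma^\varepsilon_\delta$ lies in $\mbR^d_+$ at distance $O(\delta)$ from $x^0$, and A1 prevents immediate return to $H$. Since $b^+$ is Lipschitz in the upper half space, a standard Gronwall estimate for $|X^\varepsilon(t)-X^+(t)|$, using $\sigma^\varepsilon_\delta\to 0$, $X^\varepsilon(\sigma^\varepsilon_\delta)\to x^0$ and $\sup_{t\le T}|\varepsilon w(t)|\to 0$, gives $X^\varepsilon(\cdot\wedge\tau^+_H)\to X^+(\cdot\wedge\tau^+_H)$ in probability, uniformly on compacts; the symmetric statement holds on the negative side. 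Combining these two conditional convergences with the exit side probabilities of the previous paragraph, and finally sending $\delta\to 0$, identifies the weak limit of $X^{\varepsilon_k}(\cdot\wedge\tau^+_H\wedge\tau^-_H)$ with $p_+\delta_{X^+(\cdot\wedge\tau^+_H\wedge\tau^-_H)}+p_-\delta_{X^-(\cdot\wedge\tau^+_H\wedge\tau^-_H)}$, as desired.

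The main obstacle is the exit probability step. The one dimensional scale function computation is explicit only for the frozen coefficient model $Z^\varepsilon$, and transferring it to the genuine $(t,x)$-dependent drift requires a uniform control inside the $O(\varepsilon^2)$-thick boundary layer around $H$, where the diffusion term and the jump of the drift compete on the same scale. This is the multidimensional analogue of the Bafico--Baldi analysis in \cite{BaficaBalsi}, but as emphasized in the introduction it has to be carried out without access to an explicit distribution of $X^\varepsilon$.
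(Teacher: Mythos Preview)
Your proposal is correct and follows essentially the same route as the paper: the Ito--Tanaka lower bound on $|X^\varepsilon_d|$ to force a fast exit from the slab, the sandwiching of $X^\varepsilon_d$ via the one-dimensional comparison theorem between two constant-coefficient diffusions with drifts $b_d^\pm(0,x^0)\pm\eta$, and the explicit scale-function computation of their exit probabilities, followed by $\eta\to0$. The paper packages the post-exit step slightly differently---it invokes Lemma~\ref{lem_rel_comp} and \eqref{eq_A2'} to say directly that any limit point satisfies $X=X^+\1_{\Omega_+}+X^-\1_{\Omega_-}$, rather than your explicit conditional Gronwall argument---but the substance is the same, and the ``obstacle'' you flag at the end is precisely what the comparison theorem resolves, in the paper as in your outline.
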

\begin{remk}
Convergence in Theorem \ref{thm3} could not be a.s. or be convergence in probability. Indeed, assume that a sequence $\{X^\ve\}$ converges a.s. to a process $X^-(t)\1_{\Omega_-}+X^+(t)\1_{\Omega_+}$, where $\Omega_\pm$ are disjoint measurable sets, $P(\Omega_\pm)=p_\pm.$ It can be shown that $\Omega_\pm\in\cF_{0+},$ so their probabilities are either 0 or 1.   
\end{remk}
\begin{proof}
 Without loss of generality we will assume that
\be\label{eq_A2'}
 \exists c>0\ \forall t\geq 0\  \forall x\in\mbR^d\setminus H: \ \
\sign(x_d) b^d(t,x)\geq c.
\ee
Let us estimate the time spent by $X^\ve$ in the neighborhood of $H.$ By Ito-Tanaka formula, see \cite{RevuzYor},
we have
$$
|X^\ve_d(t)|=\int_0^t\sign(X_d^\ve(s))b_d(s,X^\ve(s))ds +\ve \int_0^t\sign(X_d^\ve(s)) dw_d(s) +L^\ve_d(t)=
$$
$$
=\int_0^t\sign(X_d^\ve(s))b_d(s,X^\ve(s))ds +\ve B^\ve(t) +L^\ve_d(t),
$$
where $B^\ve$ is a new Brownian motion,  $L^\ve_d$ is a non-decreasing, continuous process,  $L^\ve_d(0)=0.$

Therefore, the pair  $(|X^\ve_d|,L^\ve_d)$ is a solution of Skorokhod's reflecting problem for the driving process $\xi_\ve(t)=\int_0^t\sign(X_d^\ve(s))b_d(s,X^\ve(s))ds +\ve B^\ve(t).$ Hence, see for example \cite{PilRSDE},
$$
|X^\ve_d(t)|=\xi_\ve(t)-\min_{s\in[0,t]}\xi_\ve(s).
$$
It follows from  \eqref{eq_A2'} that
$$
|X^\ve_d(t)|\geq (ct+\ve B^\ve(t)) -\min_{s\in[0,t]}(cs+\ve B^\ve(s)).
$$
Denote $\sigma^\ve_{H_\delta}:=\inf\{t\geq 0:\ |X^\ve_d(t)|\geq \delta\}.$

Therefore
\be\label{eq_exit_est1}
P(|X^\ve_d(t)|\geq \delta,\ t\geq 2\delta/c)\to1,\ \ve\to0+,
\ee
\be\label{eq_exit_est2}
P(\sigma^\ve_{H_\delta}>  2\delta/c)\to0,\ \ve\to0+.
\ee
Moreover if   $\delta<1\wedge c/2M$, where $M=\max_{t\in [0,1],\ |x-x^0|\leq 1}|\bar b(t,x)|$, then
\be\label{eq_exit_est3}
P(|\bar X(\sigma^\ve_{H_\delta})-\bar x^0|>  2\delta M/c)\to0,\ \ve\to0+.
\ee

It follows from \eqref{eq_exit_est1}, Lemma \ref{lem_rel_comp}, and assumption \eqref{eq_A2'} that for any limit point $X$ we have with probability 1:
$$
X(t)=x^0+\1_{\Omega_+}\int_0^tb^+(s,X(s))ds+\1_{\Omega_-}\int_0^tb^-(s,X(s))ds,
$$
where $\Omega_+=\{\omega:\ X(t)>0 \mbox{ for all } t>0\},\ \Omega_-=\{\omega:\ X(t)<0\mbox{ for all }   t>0\}$.

Notice that if \eqref{eq_A2'} is true, then
\be\label{eq_284}
P(\Omega_+\cup \Omega_-)=1,\  P(\Omega_-\Delta \{X_d(\sigma_{H_\delta})=-\delta\})=0,\ \mbox{and} \ P(\Omega_+\Delta \{X_d(\sigma_{H_\delta})=\delta\})=0
\ee
for any $\delta>0.$

Let $\delta>0,\ \beta>0$ be sufficiently small fixed numbers and $\alpha$
 be such that for all $ t\in [0,\delta],x\in [x^0-\beta, x^0+\beta]:$
 $$
 0<b_d^+(t,x)-\alpha< b_d^+(0,x_0)< b_d^+(t,x)+\alpha,
 $$
 $$
 0<-b_d^-(t,x)-\alpha< -b_d^-(0,x_0)< -b_d^-(t,x)+\alpha.
 $$
 Define  processes:
$$
X^{\ve,\pm\alpha}(t)=  x^0+\int_0^t(b_d^+(0,x_0)\1_{ X^{\ve,+\alpha}(s)\geq 0}+ b_d^-(0,x_0) \1_{ X^{\ve,+\alpha}(s)< 0}\pm\alpha)ds+\ve w(t), t\geq 0.
$$
By comparison theorem, see \cite{IW}, we have a.s.
$$
X^{\ve,-\alpha}(t)\leq X^d(t)\leq X^{\ve,+\alpha}(t)
$$
for all $t\in [0, \sigma^\ve_{H_\delta}\wedge \inf\{s: |\bar X(s)-\bar x^0|>  \beta\}]$.

The processes $X^{\ve,\pm\alpha}$ are one-dimensional homogeneous diffusions. So, see \cite{GS, IW},
$$
P(X^{\ve,\pm\alpha}_d(\sigma^{\pm\alpha}_{H_\delta})=\delta)=
$$
$$
\frac{(\pm\alpha-b_d^-(0,x_0))^{-1}(1-\exp(2\delta \ve^{-2}(b_d^-(0,x_0)\pm\alpha)))}
{(\pm\alpha-b_d^-(0,x_0))^{-1}(1-\exp(2\delta \ve^{-2}(b_d^+(0,x_0)\pm\alpha)))+(\pm\alpha+b_d^+(0,x_0))^{-1}(1-\exp(2\delta \ve^{-2}(b_d^-(0,x_0)\pm\alpha)))},
$$
where $\sigma^{\pm\alpha}_{H_\delta}:=\inf\{t\geq 0:\ |X^{\ve,\pm\alpha}(t)|\geq \delta\}.$

This and \eqref{eq_284} conclude the proof.
\end{proof}

\begin{thm}
Let $x^0\in H$ and A2$_+$ or  A2$_-$ be satisfied. Then
$$
X^\ve(\cdot\wedge \tau^\pm_H)\to X^\pm(\cdot\wedge \tau^\pm_H), \ \ve\to 0, \ \mbox{a.s.},
$$
where the sign + or - is selected accordingly to the sign in condition A2, $\tau^\pm_H$ is defined in \eqref{eq4.2}.
\end{thm}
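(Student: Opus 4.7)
I treat case A2$_+$ (the case A2$_-$ follows by the reflection $x_d \mapsto -x_d$). In the locally uniform topology on $C([0,\infty),\mbR^d)$ it suffices to fix $T<\tau^+_H$ (the endpoint $T=\tau^+_H$ follows by continuity of $X^+$) and establish that $\sup_{t\in[0,T]}|X^\ve(t)-X^+(t)|\to 0$ a.s.\ as $\ve\to 0$; shrinking $T$ if necessary, I may also assume that $X^+([0,T])$ stays inside the $\delta$-neighborhood of $x^0$ on which A2$_+$ applies.

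The first step is a Gronwall decomposition. Splitting the drift of $X^\ve$ by the sign of $X^\ve_d(s)$,
\begin{equation*}
X^\ve(t)-X^+(t) = \int_0^t \1_{X^\ve_d\geq 0}\bigl[b^+(s,X^\ve)-b^+(s,X^+)\bigr]ds + \int_0^t \1_{X^\ve_d<0}\bigl[b^-(s,X^\ve)-b^+(s,X^+)\bigr]ds + \ve w(t),
\end{equation*}
using Lipschitz continuity of $b^+$ on the first integral and the uniform bound $M:=\sup_{|x-x^0|\leq\delta,\,t\leq T}(|b^+|+|b^-|)$ on the second, and localizing on the (high-probability) event that $X^\ve$ stays in the $\delta$-neighborhood of $x^0$, one arrives at
\begin{equation*}
\sup_{t\in[0,T]}|X^\ve(t)-X^+(t)| \leq C_T\bigl(|A^\ve(T)|+\ve\sup_{[0,T]}|w|\bigr),\qquad A^\ve(T):=\{s\leq T:X^\ve_d(s)<0\}.
\end{equation*}

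The core task is then to show $|A^\ve(T)|\to 0$ a.s. Here A2$_+$ enters quantitatively: the Lipschitz continuity of $b^-$ combined with $b_d^-\geq 0$ on $H$ gives $b_d(s,x)\geq -L x_d^-$ on $\{x_d\leq 0\}$ in the $\delta$-neighborhood, while $b_d\geq c>0$ on $\{x_d>0\}$. Consequently the one-dimensional SDE comparison theorem (applied, if desired, after a harmless Lipschitz regularization from below such as $c\1_{y>0}\leadsto(cy/\eta)\wedge c$) yields $X^\ve_d(t)\geq Y^\ve(t)$ up to the exit time from the neighborhood, where
\begin{equation*}
dY^\ve(t)=\bigl[c\,\1_{Y^\ve(t)>0}-L(Y^\ve(t))^-\bigr]dt+\ve\,dw_d(t),\qquad Y^\ve(0)=0.
\end{equation*}
Scale-function and exit-time estimates for the one-dimensional homogeneous diffusion $Y^\ve$, of the same nature as those used at the end of the proof of Theorem~\ref{thm3}, give $P(Y^\ve(s)<0)\leq c_1 e^{-c_2 s/\ve^2}$; Fubini then yields $E|A^\ve(T)|=O(\ve^2)$, and Chebyshev together with Borel--Cantelli along a sequence $\ve_n\searrow 0$ with $\sum\ve_n^2<\infty$ produces a.s.\ convergence of $X^{\ve_n}$ to $X^+$ (promoted to the full continuous parameter via Remark~\ref{rem_subseq} and uniqueness of the limit).

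The main obstacle is the occupation-time bound $E|A^\ve(T)|=O(\ve^2)$: one must use the one-sided condition A2$_+$ to control both the initial short excursion of $X^\ve_d$ into the wrong half-space, on the timescale $O(\ve^2)$ where noise dominates drift, and to exclude subsequent re-entries, which are exponentially rare thanks to the drift $\geq c$ on $\{x_d>0\}$. The comparison to $Y^\ve$ isolates precisely this boundary behavior and reduces the problem to one-dimensional small-noise analysis.
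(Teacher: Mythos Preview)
Your strategy is essentially the paper's: both reduce via a Gronwall decomposition to controlling the occupation time $|A^\ve(T)|=\int_0^T\1_{X^\ve_d<0}\,ds$, and both handle that via a one-dimensional comparison. The paper's minorant $X^{\ve,\alpha}_d$ has drift $(Ly-\alpha)\1_{y<0}+c\1_{y\geq 0}$ with $\alpha>0$ (so condition A1 holds and Theorem~\ref{thm3} gives the exit probability $c/(c+\alpha)$), and then lets $\alpha\to 0$; you take $\alpha=0$ from the start and aim at a quantitative occupation bound.

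Two points to flag. First, your claimed tail estimate $P(Y^\ve(s)<0)\leq c_1e^{-c_2 s/\ve^2}$ is not ``of the same nature'' as the computation at the end of Theorem~\ref{thm3}: there the drift is bounded away from zero on both sides of $H$, whereas your $Y^\ve$ has drift $LY^\ve$ on $\{y<0\}$, which vanishes at the boundary and is repulsive. The paper's $\alpha$-trick is precisely what makes the one-dimensional scale-function argument go through cleanly; with $\alpha=0$ you need a separate argument.

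Second, the step ``promoted to the full continuous parameter via Remark~\ref{rem_subseq} and uniqueness of the limit'' is a genuine gap for almost-sure convergence. Remark~\ref{rem_subseq} concerns weak convergence; applied here it yields $X^\ve\to X^+$ in probability (since the limit is deterministic), but not a.s. Borel--Cantelli along a single summable sequence $\{\ve_n\}$ says nothing about other sequences. To get a.s.\ convergence you would need either a pathwise-monotone comparison in $\ve$ or a higher-moment bound (e.g.\ $E|A^\ve(T)|^p=O(\ve^{2p})$) so that Borel--Cantelli applies along \emph{every} sequence. The paper's own proof is equally soft on this point---it also routes through Remark~\ref{rem_subseq} and Lemma~\ref{lem_rel_comp}---so your argument is at the same level of rigor as the paper's, but the a.s.\ claim in the theorem statement is not fully justified by either.
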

\begin{remk}
If condition A2$_\mp$ is satisfied at the point $X^\pm(\tau^\pm_H)$ , then we may define a moment $\tau^2_{\pm}$ similarly to  $\tau^1_{\pm}:=\tau^\pm_H$  and obtain the similar convergence of processes $\{X^\ve\}$ on $[\tau^1_\pm,\tau^2_\mp],$ and so on. Note  that if A2$_+$ is satisfied in $x^0,$ then A2$_+$ cannot be true in $X^+(\tau^1_+).$
\end{remk}
\begin{proof}
Assume that A2$_+$ is satisfied. Without loss of generality  we may assume   that
$$
\exists c>0 \ \forall t\in[0,T]\ \forall x\in\mbR^d_+ :\ \
b^d_+(t,x)\geq c
$$
and
$$
\forall t\in[0,T]\
\forall x\in H:\ \
b^d_-(t,x)\geq 0.
$$

Observe that
$$
X^\ve(t)=x^0+\int_0^t\left(\1_{\{X^\ve(s) >0\}}b^+(s,X^\ve(s))+\1_{\{X^\ve(s)<0\}}b^-(s,X^\ve(s))\right)ds+\ve w(t)=
$$
$$
= x^0+\int_0^t b^+(s,X^\ve(s)) ds+  \int_0^t\1_{\{X^\ve(s) <0\}}\left(b^-(s,X^\ve(s))-b^+(s,X^\ve(s))\right)ds+\ve w(t).
$$
Lemma \ref{lem_rel_comp} and Remark \ref{rem_subseq} yield that  to prove the Theorem it suffices to verify that any limit point $X$ of $\{X_\ve\}$ as $\ve\to 0$ is such that 
\be\label{eq_t3_1}
\int_0^T\1_{\{X_d(s)\leq 0\}} ds=0\ \ \mbox{a.s.}
\ee

  Function $b^-$ is Lipschitz continuous.
So $b_d^-(t,x)\geq Lx_d$, $x\in\mbR^d_-$.

For any $\alpha>0$ set 
$$
X^{\ve,\alpha}_d(t)=\int_0^t \left(\1_{\{X^{\ve,\alpha}_d(s) <0\}} (LX^{\ve,\alpha}_d(s)-\alpha) +\1_{\{X^{\ve,\alpha}_d(s)
 \geq 0\}} c \right)ds +\ve w(t).
$$
By comparison theorem
$X_d^\ve(t)\geq X^{\ve,\alpha}_d(t), t\in [0,T]$ a.s. 

Therefore for any limit point $X$ of $\{X_\ve\}$ 
 is such that 
$$
P(X_d(t)\geq \delta,  \ t\geq 2c/\delta)\geq 
 \limsup_{\ve\to 0}P(X^{\ve}_d(t)\geq \delta,  \ t\geq 2c/\delta)  
$$
$$
\geq \limsup_{\ve\to 0}P(X^{\ve,\alpha}_d(t)\geq \delta,  \ t\geq 2c/\delta)
 \geq P(X^\alpha_d(t)\geq \delta,  \ t\geq 3c/2\delta)=P(X^\alpha_d(t)>0, \ t>0)=\frac{c}{c+\alpha},
$$
 where  $X^\alpha_d$ is a limit of $\{X^{\ve,\alpha}_d\}$ as $\ve\to 0.$
 
 Since $\alpha$  is arbitrary, we have \eqref{eq_t3_1}.
%
%
%
%
%
\end{proof}
\begin{lem}\label{lem1}
Assume that $x^0\in H$, and condition A3 or A4 is satisfied. Let $X^0$ be any (weak) limit point for
$\{X^\ve\}.$
Denote
\be\label{eq_sigma}
\sigma_\delta=\sigma_\delta(X^0):=\inf\{t\geq 0\ : \ |X^0(t)-x^0|\geq \delta\},
\ee
where $\delta>0$ is a parameter from conditions A3, A4.

Then
$$
P(X^0(t)\in H, t\in [0,\sigma_\delta\wedge T])=1.
$$
\end{lem}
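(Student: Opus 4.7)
The plan is to derive a uniform-in-time estimate of the form
$$
\sup_{0 \leq t \leq T \wedge \sigma^\ve} |X^\ve_d(t)| \to 0 \quad \text{in probability as } \ve \to 0,
$$
where $\sigma^\ve := \inf\{t \geq 0 : |X^\ve(t) - x^0| \geq \delta\}$ denotes the exit time of the pre-limit process from the $\delta$-neighborhood of $x^0$, and then to transfer this bound to any weak limit point $X^0$ via the Skorokhod representation theorem, exploiting that $\sigma^{\ve_k}$ eventually dominates any $t$ strictly less than $\sigma_\delta(X^0)$.

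The first step splits into the two cases of the hypothesis. Under A3, I would mimic the Ito-Tanaka computation already carried out in the proof of Theorem \ref{thm3}: writing $|X^\ve_d(t)| = \xi_\ve(t) - \min_{s \leq t}\xi_\ve(s) = \max_{0 \leq s \leq t}[\xi_\ve(t) - \xi_\ve(s)]$ with $\xi_\ve(t) = \int_0^t \sign(X^\ve_d(s)) b_d(s, X^\ve(s))\, ds + \ve B^\ve(t)$, the assumption $\sign(x_d) b_d(t,x) \leq -c$ for $|x - x^0| \leq \delta$, $x \notin H$, forces
$$
\xi_\ve(t) - \xi_\ve(s) \leq -c(t-s) + \ve(B^\ve(t) - B^\ve(s)) \leq 2\ve \sup_{u \leq T} |B^\ve(u)|
$$
for all $0 \leq s \leq t \leq T \wedge \sigma^\ve$, yielding the desired bound. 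Under A4, Lipschitz continuity of $b^\pm$ combined with the vanishing $b_d^\pm(t,\cdot) \equiv 0$ on $H$ near $x^0$ gives $|b_d(s, X^\ve(s))| \leq L |X^\ve_d(s)|$ for $s \leq T \wedge \sigma^\ve$, since the projection $(\bar X^\ve(s), 0)$ lies in $H$ within the same $\delta$-ball. Because $x^0 \in H$ implies $X^\ve_d(0) = 0$, the identity $X^\ve_d(t) = \int_0^t b_d(s, X^\ve(s))\,ds + \ve w_d(t)$ and Gronwall's inequality give $\sup_{t \leq T \wedge \sigma^\ve}|X^\ve_d(t)| \leq \ve e^{LT} \sup_{u \leq T}|w_d(u)|$, which tends to $0$ almost surely.

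For the passage to the limit, fix a subsequence with $X^{\ve_k} \Rightarrow X^0$ (Remark \ref{rem_subseq}). Skorokhod's representation theorem allows us to realize the convergence almost surely and uniformly on $[0, T]$, and after a further subsequential extraction the pathwise bound from the first step also converges to $0$ almost surely. For any $t < \sigma_\delta(X^0) \wedge T$, one has $\max_{s \leq t}|X^0(s) - x^0| < \delta$, so uniform convergence forces $\max_{s \leq t}|X^{\ve_k}(s) - x^0| < \delta$ for all sufficiently large $k$, i.e.\ $t \leq \sigma^{\ve_k}$. Consequently $|X^0_d(t)| = \lim_k |X^{\ve_k}_d(t)| = 0$; continuity of $X^0_d$ then extends the identity to all $t \in [0, \sigma_\delta \wedge T]$.

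The main technical obstacle is the last step: because exit times are only lower semi-continuous in the path, one cannot compare $\sigma^\ve$ directly with $\sigma_\delta(X^0)$, and one must localise to times strictly before the limiting exit time and use continuity at the endpoint to close the argument on the closed interval. A secondary subtlety is the justification of the Ito-Tanaka identity in case A3 with the full drift $b$ despite its jump across $H$, but this is precisely the occupation-time argument already employed in the proof of Theorem \ref{thm3}.
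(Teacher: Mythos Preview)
Your argument is correct, but it follows a genuinely different route from the paper's. The paper works directly with the limit process: after invoking Skorokhod's representation, it supposes $X^0_d(t_1)>0$ for some $t_1\leq \sigma_\delta\wedge T$, lets $t_0$ be the last visit to $H$ before $t_1$, uses Lemma~\ref{lem_rel_comp} (via the uniform convergence \eqref{eq_conv_uniform}) to conclude that $X^0_d$ satisfies the deterministic ODE $X^0_d(t)=\int_{t_0}^{t} b^+_d(s,X^0(s))\,ds$ on $[t_0,t_1]$, and then derives a contradiction from Gronwall (case A4) or directly from the sign condition (case A3). No pre-limit estimate on $|X^\ve_d|$ is ever written down.

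Your approach instead proves a quantitative bound $\sup_{t\leq T\wedge\sigma^\ve}|X^\ve_d(t)|=O(\ve)$ at the SDE level and only then passes to the limit, handling the lower semi-continuity of the exit time explicitly. This buys a convergence rate for the normal coordinate and makes the mechanism (Skorokhod reflection under A3, Gronwall on the perturbed equation under A4) very transparent; it also reuses the It\^o--Tanaka computation from Theorem~\ref{thm3} rather than the ODE structure of the limit. The paper's argument is softer and shorter---it never touches It\^o--Tanaka or the reflection formula for this lemma---but gives no rate and leans on Lemma~\ref{lem_rel_comp} as a black box. Both are valid; yours is the more constructive proof.
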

\begin{proof}
We prove Lemma if the global condition A4 is satisfied:
$$
 \forall t\geq 0\ \forall x\in H :\ \
  b^d_\pm(t,x)=0.
$$
All other cases are considered similarly.

Assume that $X^{\ve_k}\Rightarrow X^0, k\to\infty.$ By Skorokhod's theorem on a single probability space we may assume that the convergence is a.s.:
\be\label{eq_conv_uniform}
\forall T>0:\ \ \sup_{t\in[0,T]}|X^{\ve_k}(t)- X^0(t)|\to 0, \ k\to\infty.
\ee
Assume that for some $\omega$ and $t_1$ we have $X^0_d(t_1)>0.$ Denote by $t_0\in[0,t_1]$ the last visit of $H$ by $X^0,$
i.e.,
$$
t_0=\sup\{s\in[0,t_1]: \ X^0_d(s)=0\}.
$$
Due to \eqref{eq_conv_uniform} we have
$$
X^0_d(t)=\int_{t_0}^{t_1}b^+_d(s,X^0(s))ds, \ t\in[t_0, t_1].
$$
Since $X^0_d(t_0)=0, X^0_d(t )\geq 0,\  t\in[t_0, t_1], $ by assumption  A4 and Lipschitz property we have
$$
X^0_d(t)\leq L\int_{t_0}^{t_1} X_d^0(s) ds, \ t\in[t_0, t_1].
$$
The application of Gronwall's lemma completes the proof.
\end{proof}

\begin{thm}\label{thm4}
Assume that $x^0=(\bar x^0,0)\in H$ and A3 (or A3$_\pm$) is satisfied, functions $b^\pm$ are continuous in $(t,x)$. Denote
$$
p^\pm(s,\bar x)=\frac{b^\pm_d(s,(\bar x,0))}{b^-_d(s,(\bar x,0))+b^+_d(s,(\bar x,0))}.
$$
Let $\bar X(t)\in\mbR^{d-1}$ be a solution of the ordinary differential equation

$
\bar X^0(t)=\bar x^0+\int_0^t\left(
\bar b^+(s,(\bar X^0(s),0))p^-(s, \bar X^0(s))+\right.
$
\be\label{eq7.1}
\left.
+\bar b^-(s,(\bar X^0(s),0))p^+(s, \bar X^0(s))
\right)ds, \ t\in[0,\sigma_\delta(X^0)\wedge T].
\ee
Then
$$
X^\ve(\cdot\wedge\sigma_\delta(X^0)\wedge T)\mathop{\longrightarrow}\limits^{P} (\bar X^0(\cdot\wedge\sigma_\delta(X^0)\wedge T), 0), \ve\to 0,
$$
\end{thm}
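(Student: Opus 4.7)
The plan is to combine Lemma~\ref{lem1}, which confines any weak limit to $H$, with a two-scale averaging argument that identifies the remaining tangential dynamics. By Lemma~\ref{lem_rel_comp} and Remark~\ref{rem_subseq} it is enough to prove that \emph{every} weak limit point $X^0$ of a convergent subsequence $X^{\ve_k}$ coincides with $(\bar X^0,0)$ where $\bar X^0$ is the unique solution of \eqref{eq7.1}; since the limit is a deterministic function, convergence in distribution along subsequences then upgrades automatically to convergence in probability of the full family. Using Skorokhod's representation I pass to a probability space where $X^{\ve_k}\to X^0$ uniformly on compacts. Lemma~\ref{lem1} already yields $X^0_d\equiv 0$ on $[0,\sigma_\delta\wedge T]$, so the task reduces to identifying the tangential limit~$\bar X^0$.

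Writing out the tangential components of \eqref{eq1.1},
\[
\bar X^\ve(t)=\bar x^0+\int_0^t\!\left[\1_{\{X^\ve_d(s)\geq 0\}}\bar b^+(s,X^\ve(s))+\1_{\{X^\ve_d(s)<0\}}\bar b^-(s,X^\ve(s))\right]ds+\ve\bar w(t).
\]
Continuity of $b^\pm$ together with the uniform convergence $X^\ve\to(\bar X^0,0)$ on $[0,\sigma_\delta\wedge T]$ give $\bar b^\pm(s,X^\ve(s))\to\bar b^\pm(s,(\bar X^0(s),0))$ uniformly, so identifying $\bar X^0$ is equivalent to the occupation-time convergence
\[
\int_0^t\1_{\{X^\ve_d(s)\geq 0\}}ds\;\mathop{\longrightarrow}\limits^{P}\;\int_0^t p^-(s,\bar X^0(s))\,ds
\]
(and the symmetric statement with $p^+$) for every $t\in[0,\sigma_\delta\wedge T]$.

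For this core step I would adapt the frozen-coefficient comparison machinery from the proof of Theorem~\ref{thm3}. Partition $[0,T]$ into intervals $[t_k,t_{k+1}]$ of length $\Delta=\Delta(\ve)$ with $\ve^2\ll\Delta\ll 1$. Under A3, the Ito--Tanaka identity applied to $|X^\ve_d|$ combined with the Skorokhod reflection representation (as in \eqref{eq_exit_est1}--\eqref{eq_exit_est3}) shows that $X^\ve_d$ stays in an $o_P(1)$-tube around $H$ and that $\bar X^\ve$ is uniformly Lipschitz, so on each piece $\bar X^\ve(s)\approx\bar X^\ve(t_k)$ and $X^\ve(s)$ lies in a small neighbourhood of $(\bar X^\ve(t_k),0)$. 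Pick $\alpha>0$ small and use the comparison theorem (as with the processes $X^{\ve,\pm\alpha}$ in the proof of Theorem~\ref{thm3}) to sandwich $X^\ve_d$ on $[t_k,t_{k+1}]$ between the one-dimensional time-homogeneous diffusions
\[
Y^{\ve,\pm\alpha}(t)=X^\ve_d(t_k)+\int_{t_k}^t\!\left[(b_d^+(t_k,(\bar X^\ve(t_k),0))\pm\alpha)\1_{Y^{\ve,\pm\alpha}\geq 0}+(b_d^-(t_k,(\bar X^\ve(t_k),0))\pm\alpha)\1_{Y^{\ve,\pm\alpha}<0}\right]ds+\ve\tilde w(t).
\]
Under A3 each $Y^{\ve,\pm\alpha}$ has an explicit two-sided exponential invariant density on $\mbR$ with invariant mass of $\{y\geq 0\}$ equal to $p^-(t_k,\bar X^\ve(t_k))+O(\alpha)$. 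Since its relaxation time is $O(\ve^2)$ and $\Delta\gg\ve^2$, a standard ergodic estimate yields
\[
\frac{1}{\Delta}\int_{t_k}^{t_{k+1}}\1_{\{X^\ve_d(s)\geq 0\}}ds=p^-(t_k,\bar X^\ve(t_k))+O(\alpha)+o_P(1).
\]
Summing over $k$, using continuity of $p^\pm$ in $(s,\bar x)$, and sending first $\ve\to 0$, then $\Delta,\alpha\to 0$, delivers the occupation-time convergence. Local Lipschitz continuity of the coefficients of \eqref{eq7.1} (whose denominator under A3 is bounded away from zero on a neighbourhood of $x^0$) then gives uniqueness of $\bar X^0$ up to $\sigma_\delta\wedge T$.

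The main obstacle is this two-scale averaging: $X^\ve_d$ equilibrates on the fast scale $\ve^2$ while $\bar X^\ve$ evolves on scale $1$, and the comparison with frozen-coefficient one-dimensional diffusions introduces errors that must be controlled uniformly in $\ve$ and in the mesh $\Delta$; the choice $\Delta=\Delta(\ve)$ with $\ve^2/\Delta\to 0$, together with tightness bounds on the spatial oscillation of $\bar X^\ve$ and on the exit times of $X^\ve$ from small neighbourhoods of $(\bar X^\ve(t_k),0)$, should close the argument. Cases A3$_\pm$ are handled by the same scheme, with one of the weights $p^\pm$ degenerating to $1$ (and the other to $0$) because the push-back from the corresponding half-space vanishes on $H$, so the averaged drift reduces to that of the non-degenerate side.
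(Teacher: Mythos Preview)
Your outline is sound and would work, but the paper obtains the key occupation--time identity by a much shorter, purely analytic route that you may find instructive. Instead of partitioning time, freezing coefficients, and invoking the ergodic theorem for the frozen one--dimensional diffusions, the paper simply reads off the balance condition from the \emph{normal} component of the SDE. Since Lemma~\ref{lem1} gives $X^0_d\equiv 0$ on $[0,\sigma_\delta\wedge T]$, and $\ve_k w_d(\cdot)\to 0$ uniformly, the equation $X^{\ve_k}_d(t)=\int_0^t b_d(s,X^{\ve_k}(s))\,ds+\ve_k w_d(t)$ forces
\[
\sup_{t\le T}\Big|\int_0^t b_d(s,X^{\ve_k}(s))\,ds\Big|\longrightarrow 0,\qquad k\to\infty.
\]
On any short interval $\Delta\ni s_0$ this reads, after replacing $b^\pm_d(s,X^{\ve_k}(s))$ by $b^\pm_d(s_0,X^0(s_0))$ (continuity plus uniform convergence),
\[
b^+_d(s_0,X^0(s_0))\!\int_\Delta\!\1_{\{X^{\ve_k}_d\ge 0\}}\,ds+b^-_d(s_0,X^0(s_0))\!\int_\Delta\!\1_{\{X^{\ve_k}_d<0\}}\,ds\;=\;o(|\Delta|)+o_{\ve_k}(1),
\]
and since the two indicators add up to $|\Delta|$ one solves directly for the limiting occupation fraction, which is exactly $p^-(s_0,\bar X^0(s_0))$. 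This is the content of the paper's Lemma~\ref{lem_time}; the passage from local to global is then a straightforward Riemann--Stieltjes limit (Lemma~\ref{lem_conv_int}).

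The trade--off is clear. Your averaging scheme is the textbook two--scale machinery: it is robust and would generalise (e.g.\ to nonlinear normal dynamics where no balance identity is available), but it requires quantitative mixing bounds for the frozen diffusions, uniform control of the sandwich errors in $\alpha,\Delta,\ve$, and a careful choice $\ve^2\ll\Delta\ll 1$. The paper's argument bypasses all of this because, in the present piecewise--Lipschitz setting, the normal drift integral itself carries the averaging information; no invariant measure or relaxation--time estimate is needed, and the proof reduces to elementary calculus plus Skorokhod representation. Your reference to \eqref{eq_exit_est1}--\eqref{eq_exit_est3} is slightly misplaced, since those estimates pertain to the repulsive case A1 rather than the attractive case A3; here one does not need the Ito--Tanaka/Skorokhod--reflection device at all.
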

\begin{remk}
Coefficients of the equation \eqref{eq7.1} are continuous in $(s,\bar x)$ and Lipschitz in $\bar x$ until
a solution exits $\delta$-neighborhood of $x^0$ or $t>T.$ Thus there exists a  unique solution to  \eqref{eq7.1}.
\end{remk}
\begin{proof}
We only consider the case
$$
\exists c>0\ \forall t\geq 0\ \forall x\in\mbR^d\setminus H :\ \
\sign(x_d) b^d(t,x)\leq -c.
$$
Let $\{X^{\ve_k}\}$ be any weakly convergent subsequence (see Lemma \ref{lem_rel_comp}). By Skorokhod's theorem on a single probability space we may assume that
\eqref{eq_conv_uniform} holds with probability 1 and also
\be\label{eq_361}
\forall T>0\ \ \sup_{t\in[0,T]} |\int_0^tb_d(s,X^{\ve_k}(s))ds|\to 0, k\to\infty.
\ee
\begin{lem}\label{lem_time}
Let $\omega$ be such that \eqref{eq_conv_uniform} and \eqref{eq_361} are satisfied.
Then
\be\label{eq_time}
\forall t\geq 0:\ \  \lim_{k\to\infty} \int_0^t \1_{X^{\ve_k}(s)\geq 0}ds=\int_0^t\frac{b^-_d(s,X^0(s))}{b^-_d(s,X^0(s))+b^+_d(s,X^0(s))}ds.
\ee
\end{lem}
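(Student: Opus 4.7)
The plan is to identify the occupation density of $\{X^{\ve_k}_d\geq 0\}$ by combining the vanishing assumption \eqref{eq_361} with the uniform convergence \eqref{eq_conv_uniform} and continuity of $b^\pm$. Set $\alpha_k(t):=\int_0^t\1_{X^{\ve_k}_d(s)\geq 0}\,ds$, which is $1$-Lipschitz on $[0,T]$ with $0\le\alpha_k(t)\le t$. By Arzel\`a--Ascoli, any subsequence admits a further subsequence $\alpha_{k_l}\to\alpha$ uniformly on compacts, with $\alpha$ absolutely continuous and density $g\in[0,1]$. If one shows that $g$ must coincide with the density on the right of \eqref{eq_time}, then the limit is subsequence-independent and the full sequence converges, which is what the lemma asserts.

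The core of the argument is the decomposition
\[
\int_0^t b_d(s,X^{\ve_k}(s))\,ds=\int_0^t b^+_d(s,X^{\ve_k}(s))\,d\alpha_k(s)+\int_0^t b^-_d(s,X^{\ve_k}(s))\,d(s-\alpha_k(s)).
\]
By Lemma \ref{lem1}, $X^0(s)\in H$ on $[0,\sigma_\delta\wedge T]$, so $\phi^\pm(s):=b^\pm_d(s,X^0(s))$ are continuous functions of $s$ alone. Using \eqref{eq_conv_uniform} together with continuity and local boundedness of $b^\pm$, one replaces $b^\pm_d(s,X^{\ve_k}(s))$ by $\phi^\pm(s)$ with an error that vanishes uniformly on $[0,t]$. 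Passing to the limit along $k_l$, the left-hand side tends to $0$ by \eqref{eq_361}, while on the right the measures $d\alpha_{k_l}$ converge weakly to $g(s)\,ds$ against the continuous integrands $\phi^\pm$; this yields the integral identity
\[
\int_0^t\bigl[\phi^+(s)g(s)+\phi^-(s)(1-g(s))\bigr]\,ds=0\qquad\text{for every }t\in[0,\sigma_\delta\wedge T].
\]
Differentiating in $t$ produces the pointwise relation $(\phi^+(s)-\phi^-(s))g(s)=-\phi^-(s)$ a.e. Condition A3 gives $\phi^+\le -c<0<c\le\phi^-$ on $H$, so $\phi^+-\phi^-$ is bounded away from zero; hence $g(s)$ is uniquely determined and agrees with the density claimed in the lemma.

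The main obstacle is the passage to the limit in the second step: the indicators $\1_{X^{\ve_k}_d\geq 0}$ oscillate irregularly, so no pointwise control is available. What makes the weak-convergence step go through is, first, that by Lemma \ref{lem1} the limit integrands $\phi^\pm$ depend only on $s$ (a fixed continuous path), and second, that the equi-Lipschitz bound on $\alpha_k$ turns the wild indicators into a tight family of absolutely continuous functions, whose limits are completely pinned down by the integral identity above. The remaining step of uniformly swapping $b^\pm_d(s,X^{\ve_k}(s))$ for $\phi^\pm(s)$ is routine given continuity and \eqref{eq_conv_uniform}, and does not require any further argument.
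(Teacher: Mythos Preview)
Your argument is correct. The compactness--identification scheme you use is the standard one: pass to a subsequence via Arzel\`a--Ascoli on the $1$-Lipschitz family $\alpha_k$, feed the decomposition through \eqref{eq_361}, replace $b^\pm_d(s,X^{\ve_k}(s))$ by $\phi^\pm(s)$ using \eqref{eq_conv_uniform} and continuity, and then differentiate the resulting integral identity to pin down $g$ uniquely (the denominator $\phi^+-\phi^-$ being bounded away from zero by A3). Since the limit density is determined, the whole sequence converges, as you say.

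The paper argues differently, in a local and contradiction-based way. Instead of extracting a global limit of $\alpha_k$, it fixes a point $s_0$, freezes the coefficients at $b^\pm_d(s_0,X^0(s_0))$ on a short interval $\Delta\ni s_0$, and obtains a lower bound for $\big|\int_\Delta b_d(s,X^{\ve_k}(s))\,ds\big|$ in terms of the discrepancy $\big||\Delta|^{-1}\int_\Delta\1_{X^{\ve_k}_d\ge 0}\,ds-p^-(s_0)\big|$, up to $o_{\ve_k}(1)+o(|\Delta|)/|\Delta|$ errors. If \eqref{eq_time} failed, a Lebesgue-differentiation type argument would produce $s_0$, nested intervals $\Delta_n\downarrow\{s_0\}$ and a subsequence along which this discrepancy stays bounded away from zero, contradicting \eqref{eq_361}.

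Both routes exploit the same balance relation $\phi^+g+\phi^-(1-g)=0$. Your global Arzel\`a--Ascoli/weak-limit packaging is cleaner and yields uniform convergence of $\alpha_k$ for free; the paper's localisation avoids invoking weak convergence of measures and works directly with the pointwise density, at the cost of a slightly more delicate $o(|\Delta|)$ bookkeeping. Either way the crucial input is that $\phi^+-\phi^-$ does not vanish, which is exactly what A3 supplies.
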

\begin{remk}
Since all functions under the integral signs in \eqref{eq_time} are bounded,
convergence in Lemma \ref{lem_time} is locally
uniform.
\end{remk}
\begin{proof}
Let  $s_0$ be arbitrary and $\Delta=[t_1,t_2]$ is such that $s_0\in\Delta.$ Then
$$
|\int_\Delta b_d(s,X^{\ve_k}(s))ds| =
\int_\Delta b^-_d(s,X^{\ve_k}(s))\1_{X^{\ve_k}(s)< 0}ds -\int_\Delta b^+_d(s,X^{\ve_k}(s))\1_{X^{\ve_k}(s)\geq 0}ds =
$$
$$
=b^-_d(s_0,X^{\ve_k}(s_0)) \int_\Delta  \1_{X^{\ve_k}(s)< 0}ds -b^+_d(s_0,X^{\ve_k}(s_0))\int_\Delta  \1_{X^{\ve_k}(s)\geq 0}ds
+
$$
$$
+\int_\Delta (b^-_d(s,X^{\ve_k}(s))-b^-_d(s_0,X^{\ve_k}(s_0)))\1_{X^{\ve_k}(s)< 0}ds -\int_\Delta (b^+_d(s,X^{\ve_k}(s))-b^+_d(s_0,X^{\ve_k}(s_0)))\1_{X^{\ve_k}(s)\geq 0}ds .
$$
It follows from the last equality and \eqref{eq_conv_uniform} that
$$
|\int_\Delta b_d(s,X^{\ve_k}(s))ds|  \geq
$$
$$
\geq b^-_d(s_0,X^{0}(s_0)) \int_\Delta  \1_{X^{\ve_k}(s)< 0}ds -b^+_d(s_0,X^{0}(s_0))\int_\Delta  \1_{X^{\ve_k}(s)\geq 0}ds
+o_{\ve_k}(1)|\Delta|+o(|\Delta|)=
$$
\be\label{397}
=|\Delta|\left(
 \frac{b^-_d(s_0,X^{0}(s_0))}{b^-_d(s_0,X^{0}(s_0))+b^+_d(s_0,X^{0}(s_0))}-|\Delta|^{-1}\int_\Delta  \1_{X^{\ve_k}(s)\geq 0}ds
+o_{\ve_k}(1)+o(|\Delta|)/|\Delta|
\right),
\ee
where $o_{\ve_k}(1)$ is independent of $\Delta$, $o_{\ve_k}(1)\to 0, k\to\infty$, and
$o(\Delta)$ is independent of $k.$

If \eqref{eq_time} is not true, then  there exists
a point $s_0$, a subsequence $\{\ve_{k_l}\} $, and a sequence of intervals $\{\Delta_n\},\ \Delta_{n+1}\subset\Delta_n, \ s_0\in\Delta_n, \ n\geq 1, \lim_{n\to\infty}|\Delta_n|=0$ such that
$$
\liminf_{n\to\infty}\liminf_{l\to\infty}
\left|
 |\Delta_n|^{-1}\int_{\Delta_n} \1_{X^{\ve_k}(s)\geq 0}ds-\frac{b^-_d(s_0,X^{0}(s_0))}{b^-_d(s_0,X^{0}(s_0))+b^+_d(s_0,X^{0}(s_0))}
\right|>0.
$$
This contradicts \eqref{397}.
\end{proof}
The following result is well known.
\begin{lem}\label{lem_conv_int}
Assume that $\{f_n\},\ \{l_n\}\subset C([0,T])$ are uniformly convergent sequences of continuous functions
$$
f_n\to f_0 \mbox{ and } l_n\to l_0,\ \ n\to \infty,
$$
and each function $l_n$ is non-decreasing.

Then we have the uniform convergence of the integrals
$$
\sup_{t\in[0,T]}\left
|\int_0^tf_n(s)dl_n(s)-\int_0^tf_0(s)dl_0(s)\right|\to 0,\ n\to\infty.
$$
\end{lem}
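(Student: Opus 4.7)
The plan is to reduce the statement to two separate contributions via a standard splitting, and then control each one with the modulus of continuity of $f_0$ together with a step function approximation. First I would observe that the uniform limit $l_0$ of continuous non-decreasing functions is itself continuous and non-decreasing, so both $\int_0^t f_0\, dl_0$ and $\int_0^t f_n\, dl_n$ are well-defined Lebesgue--Stieltjes integrals. Set $V:=\sup_n\bigl(l_n(T)-l_n(0)\bigr)$; this supremum is finite since $\|l_n\|_\infty$ is bounded by uniform convergence.

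The decomposition I would use is
$$\int_0^t f_n\, dl_n-\int_0^t f_0\, dl_0 \;=\; \int_0^t(f_n-f_0)\, dl_n \;+\;\Bigl(\int_0^t f_0\, dl_n-\int_0^t f_0\, dl_0\Bigr).$$
The first term is controlled by the trivial estimate
$$\Bigl|\int_0^t (f_n-f_0)\, dl_n\Bigr| \;\le\; \|f_n-f_0\|_\infty\,\bigl(l_n(t)-l_n(0)\bigr)\;\le\; V\,\|f_n-f_0\|_\infty,$$
which tends to $0$ uniformly in $t\in[0,T]$.

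The second term is the main point. Given $\varepsilon>0$, by uniform continuity of $f_0$ I choose a partition $0=s_0<s_1<\cdots<s_N=T$ fine enough that the oscillation of $f_0$ on each $[s_{k-1},s_k]$ is at most $\varepsilon$, and let $g$ be the step function with value $f_0(s_{k-1})$ on $[s_{k-1},s_k)$ and value $f_0(T)$ at $t=T$. Then $\|f_0-g\|_\infty\le\varepsilon$, so each of $\int_0^t(f_0-g)\, dl_n$ and $\int_0^t(f_0-g)\, dl_0$ is bounded in absolute value by $V\varepsilon$ uniformly in $t$ and $n$. On the other hand,
$$\int_0^t g\, dl_n=\sum_{k=1}^{N} f_0(s_{k-1})\bigl[l_n(s_k\wedge t)-l_n(s_{k-1}\wedge t)\bigr],$$
which is a finite sum of quantities that converge uniformly in $t$ to the analogous expression with $l_0$, thanks to the uniform convergence $l_n\to l_0$. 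Sending first $n\to\infty$ and then $\varepsilon\to 0$ yields the claim.

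The only delicate point is the uniformity in $t$, and the step function approximation is precisely what enforces it: the $t$-dependence gets trapped inside the finitely many quantities $l_n(s_k\wedge t)$, for which uniform convergence is immediate, while the approximation error from replacing $f_0$ by $g$ is controlled by $\varepsilon$ times the (bounded) total variation of $l_n$ and $l_0$, independently of $t$. This is a quantitative Helly--Bray type statement; no deeper machinery is required.
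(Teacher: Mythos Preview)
Your proof is correct. The splitting into $\int_0^t(f_n-f_0)\,dl_n$ and $\int_0^t f_0\,d(l_n-l_0)$ is the natural one, the bound on the first piece via the uniformly bounded total variation $V$ is immediate, and your step-function approximation of $f_0$ handles the second piece cleanly; the observation that the $t$-dependence is confined to the finitely many quantities $l_n(s_k\wedge t)$, each of which converges uniformly because $\|l_n-l_0\|_\infty\to 0$, is exactly what is needed for uniformity.

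As to comparison with the paper: there is nothing to compare. The paper does not prove this lemma at all; it is introduced with the sentence ``The following result is well known'' and no argument is given. Your write-up therefore supplies the details the paper omits, and the Helly--Bray type argument you give is indeed the standard elementary route for this kind of statement.
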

The proof of weak convergence $X^\ve\Rightarrow X^0$ follows from Lemmas \ref{lem_time}, \ref{lem_conv_int}, Remark \ref{rem_subseq}, and \eqref{eq_conv_uniform} if we set (recall that $X^{\ve_k}$ are copies)
$f^\pm_n(t):=b^\pm(t,X^{\ve_n}(t)),\ l^\pm_n(t):=
 \int_0^t \1_{X^{\ve_k}(s)\geq 0}ds,\  l^\pm_0(t):=\int_0^t\frac{\pm b^\mp_d(s,X^0(s))}{b^-_d(s,X^0(s))+b^+_d(s,X^0(s))}ds.$ Since $X_0$ is non-random,   weak convergence implies convergence in probability.
\end{proof}

Consider the case A4.

Assume that the  limits exist
$$
c^\pm(t,\bar x):=\lim\limits_{x_d\to 0\pm}\frac{b^\pm_d(t,x)}{x_d}, t\in[0,T], |\bar x-\bar x^0|<\delta.
$$

Since $b^\pm_d(t,x)=0, t\in[0,T], |x-x^0|<\delta,$ and Lipschitz in $x$, functions  $c^\pm_d$ are bounded for
$t\in[0,T], |\bar x-\bar x^0|<\delta.$

Consider the system
\be\label{eq9.1}
\begin{cases}
\bar X(t)=\bar x^0+\int_0^t \left(\bar b^+(s, (\bar X(s),0))\1_{Y(s)\geq 0}+
 \bar b^-(s, (\bar X(s),0))\1_{Y(s)< 0}\right)ds,\\
 Y(t)=\int_0^t \Big( c^+(s, \bar X(s))\1_{Y(s)\geq 0}+  c^{ -}(s, \bar X(s))\1_{Y(s)< 0}\Big)Y(s)ds+w_d(t).
\end{cases}
\ee
\begin{lem}
1) There exists a unique weak solution to \eqref{eq9.1} defined up to the moment $\sigma_\delta(\bar X)\wedge T,$ where $\sigma_\delta(\bar X):=\inf\{t\geq 0\ : \ |\bar X(t)-\bar x^0|\geq \delta\}$.
2) (a) There exists a unique unique strong solution to \eqref{eq9.1} defined up
to the moment $\sigma _{\delta }(\overline{X})\wedge T$, if the functions $%
c^{\pm }(s,\overline{x})=c^{\pm }(s)$ are independent of $\overline{x}$.%
\newline
b) The system \eqref{eq9.1} has a unique maximal solution, if e.g. the
functions $\overline{b}^{\pm }(s,(\overline{x},0)),c^{\pm }(s,\overline{x})$
belong to $C^{3,3}([0,T]\times \mathbb{R}^{d-1}).$

\end{lem}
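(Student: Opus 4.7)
The plan hinges on the structural observation that, given the path of $Y$, the equation for $\bar X$ is an ODE whose coefficient is Lipschitz in $\bar X$ (since $\bar b^\pm$ are Lipschitz and, for any $Y$ with a nontrivial Brownian component, the set $\{t:Y(t)=0\}$ has Lebesgue measure zero), while given $\bar X$, the equation for $Y$ is a one-dimensional SDE whose drift $y\mapsto(c^+(s,\bar X(s))\1_{y\geq 0}+c^-(s,\bar X(s))\1_{y<0})y$ is continuous at $0$ and globally Lipschitz in $y$, with constant bounded by $\sup_\pm\|c^\pm\|_\infty$ on the region $\{|\bar X-\bar x^0|\leq\delta\}$. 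The proof will exploit this two-way decoupling.

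\textbf{Part 1 (weak existence and uniqueness in law).} I would start from a standard Brownian motion $w_d$ on some filtered space $(\Omega,\filt,P)$, set $Y(t):=w_d(t)$, and solve pathwise for $\bar X$ in the ODE up to $\sigma_\delta\wedge T$; existence and uniqueness follow from Carathéodory's theorem since $\bar b^\pm(s,(\cdot,0))$ are Lipschitz in $\bar X$ uniformly in $s$. Defining
$$\beta(s):=\bigl(c^+(s,\bar X(s))\1_{Y(s)\geq 0}+c^-(s,\bar X(s))\1_{Y(s)<0}\bigr)Y(s),$$
which on the stopped interval satisfies $|\beta|\leq C|w_d|$, I would apply Girsanov with density
$$\exp\!\Bigl(\int_0^{t\wedge\sigma_\delta\wedge T}\beta\,dw_d-\tfrac12\int_0^{t\wedge\sigma_\delta\wedge T}\beta^2\,ds\Bigr).$$
This is a true martingale by partitioning $[0,T]$ into pieces short enough that $E\exp(\tfrac12C^2\int_{t_i}^{t_{i+1}}w_d^2\,ds)<\infty$ (a Gaussian moment estimate) and iterating Novikov via the tower property. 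Under the new measure, $\tilde w_d(t)=w_d(t)-\int_0^{t\wedge\sigma_\delta\wedge T}\beta\,ds$ is a Brownian motion, so $(\bar X,Y,\tilde w_d)$ is a weak solution of \eqref{eq9.1} on $[0,\sigma_\delta\wedge T]$. For uniqueness in law, given any weak solution $(\bar X,Y,w_d)$ I would run Girsanov in reverse to strip the drift from $Y$; under the transformed measure $Y$ is a Brownian motion and $\bar X$ is its unique pathwise ODE-image, so the joint law is forced, and inverting the density then pins down the original law.

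\textbf{Part 2(a).} When $c^\pm=c^\pm(s)$ is independent of $\bar x$, the $Y$-equation decouples:
$$Y(t)=\int_0^t\bigl(c^+(s)\1_{Y(s)\geq 0}+c^-(s)\1_{Y(s)<0}\bigr)Y(s)\,ds+w_d(t).$$
The drift is globally Lipschitz in $y$ uniformly in $s$: on each half-line it is linear, and a direct estimate across the kink at $0$, where both pieces vanish, gives $|\mathrm{drift}(y_1)-\mathrm{drift}(y_2)|\leq\max(\|c^+\|_\infty,\|c^-\|_\infty)|y_1-y_2|$. Classical Itô theory therefore yields a pathwise unique strong $Y$. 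Given $Y$, the ODE for $\bar X$ has measurable-in-$s$, Lipschitz-in-$\bar X$ coefficient (the discontinuities are concentrated on the Lebesgue-null set $\{Y=0\}$), so Carathéodory produces a pathwise unique $\bar X$ on $[0,\sigma_\delta(\bar X)\wedge T]$.

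\textbf{Part 2(b).} Under $C^{3,3}$ smoothness I would regularize $\1_{\cdot\geq 0}$ by a smooth monotone sequence $\chi_n$ ($=1$ on $[1/n,\infty)$, $=0$ on $(-\infty,-1/n]$) and replace the indicator-weighted coefficients by $\chi_n$-interpolations. The regularized system has smooth coefficients of linear growth in $Y$, so classical SDE theory provides a unique maximal strong solution $(\bar X^n,Y^n)$. Passing to the limit $n\to\infty$, tightness is clear, and non-degeneracy of the noise forces the occupation time of any limit $Y$ at $\{0\}$ to vanish almost surely (Itô–Tanaka combined with absolute continuity of $Y(s)$ for the approximation), so the boundary layer $\{|y|\leq 1/n\}$ contributes nothing and the limit solves \eqref{eq9.1}. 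Pathwise uniqueness for two strong solutions driven by the same $w_d$ is obtained by applying Tanaka's formula to $|Y_1-Y_2|$ and to $|\bar X_1-\bar X_2|$, using the $C^3$ hypothesis to Taylor-expand the coefficients around $\{Y=0\}$ and absorb the discontinuity term into a Gronwall estimate, which produces a unique maximal strong solution by Yamada–Watanabe combined with Part 1. The main obstacle is precisely this last step: controlling the contribution of the sign-discontinuity in the pathwise comparison is what forces the local-time machinery and makes the $C^3$ regularity essential; Parts 1 and 2(a) by contrast reduce cleanly to Girsanov and to Lipschitz-SDE theory respectively.
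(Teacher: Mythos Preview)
Your Parts 1 and 2(a) coincide with the paper's approach: the paper says only that weak existence and uniqueness follow from Girsanov's theorem, and that 2(a) is ``obvious because all coefficients are Lipschitz continuous in the spatial variable.'' Your write-up is in fact more careful than the paper's in 2(a), since you correctly observe that one must first solve the decoupled $Y$-equation (whose drift $y\mapsto(c^+(s)\1_{y\geq 0}+c^-(s)\1_{y<0})y$ is Lipschitz in $y$) and only then the $\bar X$-ODE; the full system drift is \emph{not} jointly Lipschitz in $(\bar X,Y)$ because of the indicator.

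For Part 2(b) the paper does not argue at all: it simply invokes Theorem~3.2 of Leobacher--Sz\"olgyenyi--Thonhauser \cite{LTS}, which handles SDEs with drift discontinuous along a hypersurface under $C^3$-type regularity by constructing a local diffeomorphism of the state space that renders the drift continuous. Your direct route via regularization and Tanaka has a genuine gap at the pathwise-uniqueness step. The regularization/tightness part only reproduces weak existence, already covered by Part~1. For uniqueness, the difficulty lies in the $\bar X$-equation: when $Y_1$ and $Y_2$ have opposite signs the drift difference contains the jump $\bar b^+(s,(\bar X,0))-\bar b^-(s,(\bar X,0))$, which does not vanish as $Y\to 0$ and cannot be ``Taylor-expanded around $\{Y=0\}$'' because the $\bar X$-drift depends on $Y$ only through the indicator. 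Any Gronwall estimate on $|\bar X_1-\bar X_2|+|Y_1-Y_2|$ therefore acquires a term of the form $\int_0^t \1_{\{\sgn Y_1(s)\neq \sgn Y_2(s)\}}\,ds$ that is not controlled by $|Y_1-Y_2|$ in an obvious way. Removing precisely this obstruction is what the \cite{LTS} transformation does, and it is where the $C^{3,3}$ hypothesis is actually used; your sketch does not indicate how the smoothness would close the estimate.
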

\begin{proof}
Proof of the weak existence and uniqueness follows from the Girsanov
theorem. Proof in the case 2a) is obvious because all coefficients are
Lipschitz continuous in the spatial variable.

As for the proof of case 2b), see Theorem 3.2 in \cite{LTS}.
\end{proof}

\begin{thm}\label{thm5}
Assume that $x^0\in H$, functions $c^\pm$ are continuous in $(t,x)$, and assumption A4 is satisfied.

Then we have the weak  convergence

$
 \left(\bar X^\ve(\cdot\wedge\sigma_\delta(\bar X^\ve)\wedge T), \ve^{-1} X_d^\ve(\cdot\wedge\sigma_\delta(\bar X^\ve)\wedge T )\right) \to
$
 $$
 \ \ \   \ \ \ \to
\left(\bar X(\cdot\wedge\sigma_\delta(\bar X)\wedge T),  Y(\cdot\wedge\sigma_\delta(\bar X)\wedge T )\right),\ \    \ve\to 0,
 $$
 where $(\bar X,Y)$ is a solution of \eqref{eq9.1}.

In particular $X^\ve(\cdot\wedge\sigma_\delta(\bar X^\ve)\wedge T)\to  \left(\bar X(\cdot\wedge\sigma_\delta(\bar X)\wedge T), 0\right)$ weakly.

 Moreover, if  there exists a strong solution to \eqref{eq9.1}, then   not only weak convergence holds but also  convergence in probability.
\end{thm}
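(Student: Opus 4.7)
\medskip

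The plan is to rescale the normal component by setting $Y^\ve(t):=\ve^{-1}X^\ve_d(t)$ and to work with the pair $(\bar X^\ve,Y^\ve)$ up to the exit time $\sigma_\ve:=\sigma_\delta(\bar X^\ve)\wedge T$. Since A4 gives $b_d^\pm(t,(\bar x,0))=0$ in a neighbourhood of $x^0$, and since $c^\pm(t,\bar x)=\lim_{x_d\to 0\pm}b_d^\pm(t,x)/x_d$ exist and are bounded by Lipschitz continuity, I can write $b_d^\pm(t,x)=c^\pm(t,\bar x)x_d+r^\pm(t,x)x_d$ with $|r^\pm(t,x)|\to 0$ as $x_d\to 0$, uniformly in $(t,\bar x)$ on the relevant compact set. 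Thus
\begin{equation*}
dY^\ve(t)=\bigl[c^+(t,\bar X^\ve)\1_{Y^\ve\ge 0}+c^-(t,\bar X^\ve)\1_{Y^\ve<0}\bigr]Y^\ve(t)\,dt+\rho^\ve(t)\,dt+dw_d(t),
\end{equation*}
where $\rho^\ve(t)\to 0$ uniformly on $[0,\sigma_\ve]$ as $\ve\to 0$, since $|X_d^\ve|\le\delta$ forces $x_d\to 0$ along the trajectory only if $\ve\to 0$; in fact one only needs $|r^\pm(t,X^\ve)|\to 0$ in the sense that $\int_0^{\sigma_\ve}|\rho^\ve(s)Y^\ve(s)|ds\to 0$ in probability, which I would obtain from $|X_d^\ve|\le\delta$ combined with a linear-growth Gronwall bound on $E\sup|Y^\ve|^2$.

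Next I would establish tightness of $(\bar X^\ve,Y^\ve)$ on $[0,T]$ (stopped at $\sigma_\ve$). Tightness of $\bar X^\ve$ is immediate from boundedness of $\bar b^\pm$ near $x^0$ and the $\ve$-small noise. Tightness of $Y^\ve$ follows by the Gronwall argument above together with the Kolmogorov criterion, since the drift is linear in $Y^\ve$ with bounded coefficients and the diffusion is unit Brownian motion. By Prokhorov and Skorokhod, I extract a subsequence along which $(\bar X^{\ve_k},Y^{\ve_k},w)$ converges a.s.\ uniformly on compacts to some $(\bar X,Y,w)$, with $w$ a Brownian motion.

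The identification of the limit is the main obstacle. I have to pass to the limit in the indicator drifts $\1_{Y^{\ve_k}\ge 0}$, which are discontinuous functionals of the trajectory. The key observation is that the limiting $Y$ is a non-degenerate diffusion, so by a standard occupation-time / local-time estimate (Krylov's bound for the density of $Y^\ve$, or direct Ito--Tanaka on the limit), the Lebesgue measure of $\{t:Y(t)=0\}$ is zero almost surely, and likewise $P(\int_0^T\1_{|Y^{\ve_k}|\le\eta}ds)\to 0$ as first $k\to\infty$ then $\eta\to 0$. Combined with uniform a.s.\ convergence $Y^{\ve_k}\to Y$, this gives $\int_0^t\1_{Y^{\ve_k}\ge 0}f(s)ds\to\int_0^t\1_{Y\ge 0}f(s)ds$ for any continuous $f$. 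Applying this to $f(s)=\bar b^\pm(s,(\bar X(s),0))$ (using $X_d^\ve=\ve Y^\ve\to 0$ uniformly and continuity of $\bar b^\pm$) identifies the tangential equation, and applying it with $f(s)=c^\pm(s,\bar X(s))Y(s)$ identifies the $Y$-equation. So $(\bar X,Y)$ satisfies \eqref{eq9.1}; by weak uniqueness the limit is unique, hence the whole family converges weakly, which gives the first assertion. The statement $X^\ve\Rightarrow(\bar X,0)$ follows because $X_d^\ve=\ve Y^\ve\to 0$ uniformly on $[0,\sigma_\ve]$.

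Finally, if \eqref{eq9.1} admits a strong solution, then pathwise uniqueness holds on the same probability space. The Yamada--Watanabe style argument then upgrades weak convergence to convergence in probability: given any subsequence of $\ve$'s, extract a further subsequence converging in law to the unique strong solution $(\bar X,Y)$; since this limit is adapted to the driving Brownian motion $w_d$ (which is the same for every $\ve$), standard arguments (Gy\"ongy--Krylov) give convergence in probability along that subsequence, and hence along the original family.
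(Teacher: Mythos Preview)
Your proof follows essentially the same route as the paper's: rescale $Y^\ve=\ve^{-1}X_d^\ve$, establish tightness of $(\bar X^\ve,Y^\ve)$ via the linear-growth Gronwall bound, pass to an a.s.\ convergent subsequence by Skorokhod's representation, show the limit spends zero Lebesgue time at $0$ so that the indicator drifts converge, identify the limit equation, and invoke weak uniqueness (with the Gy\"ongy--Krylov/Cherny-type coupling for the upgrade to convergence in probability). One small caution: the phrase ``the limiting $Y$ is a non-degenerate diffusion'' is circular at that stage, since the equation for $Y$ is precisely what you are trying to identify; the paper handles this by noting that $\tilde Y-\tilde w_d$ is the a.s.\ limit of absolutely continuous adapted processes with uniformly bounded derivative, so $\tilde Y$ is a continuous semimartingale with $\langle\tilde Y\rangle_t=t$ and hence $\int_0^\infty\1_{\tilde Y(s)=0}\,ds=0$. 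Your alternative route via Krylov's estimate on the prelimit $Y^\ve$ is non-circular and equally valid.
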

\begin{remk}
Weak uniqueness and strong existence yield uniqueness of the strong solution, see reasoning of \cite{Cherny}.
\end{remk}
\begin{proof}
For simplicity we assume that 
 $$
\forall t\geq 0\ \forall x\in H:\ \
  b^d_\pm(t,x)=0.
$$
Set $Y^\ve(t):=X^\ve_d(t)/\ve.$
Then
$$
\begin{cases}
\bar X^\ve(t)=\bar x^0+\int_0^t \left(\bar b^+(s, (\bar X^\ve(s),Y^\ve(s)))\1_{Y^\ve(s)\geq 0}+
 \bar b^-(s, (\bar X^\ve(s),Y^\ve(s)))\1_{Y^\ve(s)< 0}\right)ds+\ve \bar w(t),\\
 Y^\ve(t)=\int_0^t \Big( \frac{b^+(s, (\bar X^\ve(s),\ve Y^\ve(s)))}{\ve Y^\ve(s)}\1_{Y^\ve(s)\geq 0}+  \frac{b^-(s, (\bar X^\ve(s),\ve Y^\ve(s)))}{\ve Y^\ve(s)}\1_{Y^\ve(s)< 0}\Big)Y^\ve(s)ds+w_d(t).
\end{cases}
$$
\begin{remk}
$\int_0^\infty \1_{Y^\ve(s)=0} ds =0$ a.s.
\end{remk}
It can be readily shown that a family $\{(\bar X^\ve, Y^\ve), \ve\in(0,1)\}$ is weakly relatively compact
in $C([0,\infty); \mbR^d).$ Let  $\{(\bar X^{\ve_n}, Y^{\ve_n})\}$ be a convergent subsequence, where $\lim_{n\to\infty}\ve_n=0$. By theorem on a single probability space, there is a sequence of copies
$(\tilde {\bar X}^{\ve_n}, \tilde Y^{\ve_n}, \tilde   w^{\ve_n} )\ =^d \ (  {\bar X}^{\ve_n},  Y^{\ve_n},    w )$   such that
$$
\begin{cases}
\tilde {\bar X}^{\ve_n}(t)=\bar x^0+\int_0^t \left(\bar b^+(s, (\tilde X^{\ve_n}(s),\tilde Y^{\ve_n}(s)))\1_{\tilde Y^{\ve_n}(s)\geq 0}+
 \bar b^-(s, (\tilde X^{\ve_n}(s),\tilde Y^{\ve_n}(s)))\1_{\tilde Y^{\ve_n}(s)< 0}\right)ds+{\ve_n} \tilde {\bar w}^{\ve_n}(t),\\
 \tilde Y^{\ve_n}(t)=\int_0^t \Big( \frac{b^+(s, (\tilde X^{\ve_n}(s),{\ve_n} \tilde Y^{\ve_n}(s)))}{{\ve_n} \tilde Y^{\ve_n}(s)}\1_{\tilde Y^{\ve_n}(s)\geq 0}+  \frac{b^-(s, (\tilde X^{\ve_n}(s),{\ve_n} \tilde Y^{\ve_n}(s)))}{{\ve_n} \tilde Y^{\ve_n}(s)}\1_{\tilde Y^{\ve_n}(s)< 0}\Big)\tilde Y^{\ve_n}(s)ds+\tilde w^{\ve_n}_d(t).
\end{cases}
$$
and
$$
(\tilde {\bar X}^{\ve_n}, \tilde Y^{\ve_n}, \tilde   w^{\ve_n} ) \to (\tilde {\bar X} , \tilde Y , \tilde   w)
$$
almost surely.

Observe that 
$$
\lim_{n\to\infty}\int_0^t  \Big( \frac{b^+(s, (\tilde X^{\ve_n}(s),{\ve_n} \tilde Y^{\ve_n}(s)))}{{\ve_n} \tilde Y^{\ve_n}(s)}\1_{\tilde Y^{\ve_n}(s)\geq 0}+  \frac{b^-(s, (\tilde X^{\ve_n}(s),{\ve_n} \tilde Y^{\ve_n}(s)))}{{\ve_n} \tilde Y^{\ve_n}(s)}\1_{\tilde Y^{\ve_n}(s)< 0}\Big)\tilde Y^{\ve_n}(s)ds
$$
 is of the form $\int_0^t\xi(s)ds,$ where $\xi(t)$ is independent of $\sigma$-algebra generated by $(\tilde w_d(s)-\tilde w_d(t)), s\geq t.$ So $\int_0^\infty \1_{\tilde Y (s)=0} ds =0$ a.s.

It follows from the Lebesgue dominated convergence theorem that the limit process $(\tilde {\bar X} , \tilde Y)$ is a solution of \eqref{eq9.1} with $\tilde w_d$ in place of $w_d$.
Since  $\{(\bar X^{\ve_n}, Y^{\ve_n})\}$ was arbitrary convergent subsequence, the proof of the Theorem follows from the weak uniqueness of the solution to \eqref{eq9.1}.

If there exists a unique strong solution to \eqref{eq9.1}, consider then a.s. convergent sequence of copies of
$( {\bar X}^{\ve_n},   Y^{\ve_n},     w , {\bar X} ,   Y)$:
$$
(\tilde {\bar X}^{\ve_n}, \tilde Y^{\ve_n},     \tilde w^{\ve_n} , \hat {\bar X}^{\ve_n} , \hat Y^{\ve_n})\to (\tilde {\bar X} , \tilde Y ,     \tilde w  , \hat {\bar X}  , \hat Y ).
$$
It can be seen that the limit processes $(\tilde {\bar X} , \tilde Y )$ and $(\hat {\bar X}  , \hat Y )$ satisfy the same equation with the same Wiener process $\tilde w$. It follows from  uniqueness of the strong solution that $(\tilde {\bar X} , \tilde Y )=(\hat {\bar X}  , \hat Y )$ a.s. So $(\tilde {\bar X}^{\ve_n}, \tilde Y^{\ve_n})- ( \hat {\bar X}^{\ve_n} , \hat Y^{\ve_n})\to 0$ a.s. Therefore $( {\bar X}^{\ve_n},   Y^{\ve_n})- (  {\bar X}  ,   Y )\to 0$  in probability.

\end{proof}
\begin{expl}
A limit of $X^\ve$ may be non-Markov in case A4. Indeed, assume that
$\bar b^\pm(t,x)=\bar b^\pm=\const, \bar b^+\neq \bar b^-, b^\pm_d(t,x)=0,$ and $ x^0\in H.$

It follows from Theorem \ref{thm5} that 
$$
  X^\ve(\cdot)\to (\bar X^0(\cdot), 0), \ve \to 0,
$$
where
$$
 \bar X^0(t):=x^0+ \bar b^+ l^+(t)+ \bar b^- l^-(t),
$$
 $l^\pm(t)$ is the time spent by $w_d(s), s\in[0,t],$ in the positive half-space and negative half-space, respectively.

The process $\bar X^0(t), t\geq 0,$ is not a Markov process.
\end{expl}

\end{document}